\pdfoutput=1
\documentclass[11pt]{amsart}
\baselineskip=7.0mm
\usepackage{amsmath}
\usepackage{amssymb}

\usepackage{enumerate}
\usepackage{slashed}
\usepackage{caption}
\usepackage{subcaption}
\usepackage{newlfont}
\usepackage{amsrefs}
\usepackage{comment}
\usepackage[normalem]{ulem}
\usepackage{accents}
\usepackage{leftidx}

\usepackage{harpoon}
\usepackage[pdftex]{graphicx}
\setlength{\baselineskip}{1.09\baselineskip}

\usepackage{esint}
\usepackage{relsize}
\usepackage[latin1]{inputenc}
\usepackage{xcolor}

\definecolor{brass}{rgb}{0.71, 0.65, 0.36}

\usepackage{tikz-cd}
\setlength{\baselineskip}{1.09\baselineskip}

\theoremstyle{plain}
\newtheorem{theorem}{Theorem}[section]
\newtheorem{lemma}[theorem]{Lemma}
\newtheorem{prop}[theorem]{Proposition}

\theoremstyle{definition}
\newtheorem{remark}[theorem]{Remark}

\numberwithin{equation}{section}
%\numberwithin{equation}{subsection}

\setlength{\textwidth}{6.6in} \setlength{\textheight}{8.6in}
\hoffset=-0.83truein
\voffset=-0.1truein

\theoremstyle{plain}

\numberwithin{equation}{section}

\allowdisplaybreaks

\begin{document}

\title[Static Quasi-Local Mass of Tori]{Boundary Behavior of Compact Manifolds With Scalar Curvature Lower Bounds and Static Quasi-Local Mass of Tori}
%\title[PMT for Static Quasi-Local Mass of Torus]{Positive Mass Theorem for Static Quasi-Local Mass %of Torus}
	
\author[Alaee]{Aghil Alaee}
\address{\parbox{\linewidth}{Aghil Alaee\\
Department of Mathematics and Computer Science, Clark University, Worcester, MA 01610, USA\\
Center of Mathematical Sciences and Applications, Harvard University, Cambridge, MA 02138, USA}}
\email{aalaeekhangha@clarku.edu, aghil.alaee@cmsa.fas.harvard.edu}

\author[Hung]{Pei-Ken Hung}
\address{\parbox{\linewidth}{Pei-Ken Hung\\
School of Mathematics, University of Minnesota, Minneapolis, MN 55455, USA}}
\email{pkhung@umn.edu}

\author[Khuri]{Marcus Khuri}
\address{\parbox{\linewidth}{Marcus Khuri\\
Department of Mathematics, Stony Brook University, Stony Brook, NY 11794, USA}}
\email{marcus.khuri@stonybrook.edu}

\thanks{A. Alaee acknowledges the support of NSF Grant DMS-2316965. M. Khuri acknowledges the support of NSF Grant DMS-2104229.}

\begin{abstract}
A classic result of Shi and Tam states that a 2-sphere of positive Gauss and mean curvature bounding a compact 3-manifold with nonnegative scalar curvature, must have total mean curvature not greater than that of the isometric embedding into Euclidean 3-space, with equality only for domains in this reference manifold.  We generalize this result to 2-tori of Guass curvature greater than $-1$, which bound a compact 3-manifold having scalar curvature not less than $-6$ and at least one other boundary component satisfying a `trapping condition'. The conclusion is that the total weighted mean curvature is not greater than that of an isometric embedding into the Kottler manifold, with equality only for domains in this space. Examples are given to show that the assumption of a secondary boundary component cannot be removed. The result gives a positive mass theorem for the static Brown-York mass of tori, in analogy to the Shi-Tam positivity of the standard Brown-York mass,
and represents the first such quasi-local mass positivity result for non-spherical surfaces. Furthermore, we prove a Penrose-type inequality in this setting.
%In this paper, we prove a positive mass theorem for the static quasi-local mass of a torus with %respect to the static Kottler manifold. Consider a torus with positive mean curvature and Gauss %curvature larger than minus one such that it enclosed a compact Riemannian manifold with non-empty %inner boundary. If there is an isometric embedding of this surface in the static Kottler manifold, %we prove the static quasi-local mass of torus is non-negative and the rigidity holds if and only %the compact Riemannian manifold is a bounded domain in the static Kottler manifold. Furthermore, we %prove a Penrose-type inequality for the static quasi-local mass of this torus.
\end{abstract}
	
\maketitle
	
\section{Introduction}
\label{sec1} \setcounter{equation}{0}
\setcounter{section}{1}

How does a scalar curvature lower bound affect the boundary geometry of a compact Riemannian manifold? In \cite[page 163]{Gromov} Gromov has conjectured that if a compact manifold has scalar curvature lower bound $c\in\mathbb{R}$, then the total mean curvature of the boundary should be bounded above by a constant depending only on $c$ and the intrinsic geometry of the boundary.
Most of the work in this direction has focused on the case when $c=0$ and the boundary consists of spheres, see \cite{SWW} and the references therein. A landmark result of this type for 3-manifolds is that of Shi-Tam \cite{ShiTam} (a version also holds in higher dimensions \cite{EMW}), which shows that if the boundary is mean convex then the total mean curvature of each boundary component having positive Gauss curvature is bounded above by the total mean curvature of its isometric embedding into $\mathbb{R}^3$. The difference of these two total mean curvatures appears naturally in mathematical relativity, and is known as the Brown-York \cite{BrownYork} quasi-local mass 
\begin{equation}
m_{BY}(\Sigma)=\frac{1}{8\pi}\int_{\Sigma}(H_0 -H)dA.
\end{equation}
Here $\Sigma$ is a boundary component of a compact Riemannian 3-manifold $(\Omega,g)$, $H$ is the mean curvature with respect to the outer normal, and $H_0$ is the mean curvature of the isometric image in Euclidean space. Thus, in analogy with the positive mass theorem, their result states that nonnegative scalar curvature of $\Omega$ implies that $m_{BY}(\Sigma)\geq 0$. Moreover, they showed that equality is achieved only when $(\Omega,g)$ is isometric to a domain in $\mathbb{R}^3$. 

One of the major open problems in mathematical relativity is the definition of a universal quasi-local mass for closed spacelike 2-surfaces in spacetime \cite{PenroseR}. Physically this quantity should represent the mass-energy content within the surface, while geometrically it may be interpreted as measuring the deviation of a domain enclosed by the surface from a model geometry. The use of a model or reference spacetime is especially pronounced in the Hamilton-Jacobi family of masses, which have received much attention in recent years particularly with the development of the Wang-Yau mass \cite{WangYau}. The target spacetime is typically taken to be Minkowski space, and in special cases such as with the Brown-York mass the model space is restricted further to the Euclidean constant time slices. Although there are several quasi-local mass definitions of this variety that possess desirable properties for surfaces of spherical topology, including the positive mass theorem and Penrose-type inequalities \cite{AlaeeKhuriYau3,AlaeeLesourdYau,BrownYork,LiuYau,WangYau}, little is known about the quasi-local mass of surfaces with nonzero genus. For instance, the definitions of the Wang-Yau and Liu-Yau masses are inapplicable for tori. On the other hand, Alaee-Khuri-Yau \cite{AlaeeKhuriYau2} recently introduced a notion of mass with positivity properties for surfaces of any topology which admit an isometric embedding into Minkowski space. However, a general existence result for isometric embeddings of non-spherical surfaces into Minkowski space is lacking. Moreover, it is desirable to have a purely Riemannian version of the positivity, \`{a} la Shi-Tam, for surfaces with genus.

In order to expand the range of possible model comparison geometries, it is natural for both mathematical and physical reasons to consider reference spaces which are static. Recall that a \textit{static manifold} is a Riemannian manifold $(M,b)$ which comes endowed with a positive potential function $V$ that lies in the cokernel of the linearized scalar curvature operator
\begin{equation}\label{ofnoinohiw}
(\Delta V) b-\nabla^2 V+V\text{Ric}(b)=0.
\end{equation}
The name is due to the fact that this data and relevant equations arise from static spacetimes which satisfy the Einstein vacuum equations, where the potential function plays the role of a lapse for the timelike Killing field and the Riemannian manifold is a constant time slice. In this setting the static Brown-York mass becomes
\begin{equation}
m_{BY}^S(\Sigma)=\frac{1}{8\pi}\int_{\Sigma}V\left(H_0-H\right) dA,
\end{equation}
where $H_0$ is the mean curvature of the isometric embedding $\Sigma\hookrightarrow (M,b)$. Positivity properties of the static Brown-York mass for spheres with respect to the Schwarzschild manifold have been established by Lu-Miao \cite{LuMiao} with a localized Penrose inequality, and a related spacetime version of this was obtained by Alaee-Khuri-Yau \cite{AlaeeKhuriYau3}. Furthermore, when the reference manifold is hyperbolic space Shi-Tam \cite{ShiTam2007} and Wang-Yau \cite{WangYau2007} have proven the positive mass theorem for the static mass of spheres. Other developments concerning static adaptions of various quasi-local masses have been found by several authors, for a survey see \cite{Wang}.
%The positivity of spacetime static quasi-local mass of spheres with respect to AdS-Schwarzschild %\cite{AlaeeKhuriYau1}.

In this paper, we prove the first positive mass theorem for the static quasi-local mass of tori. The model geometry will be taken to be the static Kottler manifold $(\mathbb{R}\times T^2,b=ds^2+e^{2s}\sigma)$, where $\sigma$ is a standard flat metric on the 2-torus. The potential function $V=e^s$ ensures that the static equations \eqref{ofnoinohiw} are satisfied.
Note that this manifold arises as a quotient of hyperbolic space $\mathbb{H}^3$ with identifications along horospheres, and is the induced metric on a constant time slice of the (toroidal) Kottler spacetime \cite{ChruscielSimon} with zero mass and cosmological constant $\Lambda = -3$.
A primary motivation for this choice of reference space is the isometric embedding result of Gromov \cite[Theorem $B'$ (iii), Section 3.2.4]{Gromov1}, which shows that any metric on the torus can be isometrically embedded in a scaled Kottler manifold with sufficiently large negative curvature. 
Alternatively, one may appropriately scale any torus to obtain an isometric embedding into the fixed Kottler manifold with scalar curvature $R=-6$. These isometric embeddings are constructed as graphs over the $s=0$ level set; we shall refer to such maps as \textit{graphical isometric embeddings}. It follows that there is a large family of tori which satisfy the isometric embedding criteria of the main theorem below.
%\emph{In the future paper we will address an alternative theorem for the isometric embedding of %torus in the static Kottler manifold with small negative curvature} \cite{AlaeeHungKhuri3}. 

In order to state the result, some restrictions on the topology of the domain $(\Omega,g)$ and its relation to the surface of interest $\Sigma$, will be needed.
In particular, we will utilize the so called homotopy condition from \cite{EGM}, which generalizes the
situation in which there is a retraction of $\Omega$ onto its boundary component $\Sigma$. The manifold $\Omega$ will be said to satisfy the \textit{homotopy condition} with respect $\Sigma$, if there exists a continuous map
$\rho:\Omega\rightarrow\Sigma$ such that its composition with inclusion $\rho\circ i:\Sigma\rightarrow\Sigma$
is homotopic to the identity.

\begin{theorem}\label{thm:PMT} 
Let $(\Omega,g)$ be a compact connected and orientable smooth Riemannian 3-manifold with boundary $\partial\Omega=\Sigma \sqcup \Sigma_h$, satisfying scalar curvature lower bound $R\geq -6$, the homotopy condition with respect to $\Sigma$, and $H_2(\Omega,\Sigma_h;\mathbb{Z})=0$.  Assume that $\Sigma$ is a torus with Gauss curvature $K>-1$ and mean curvature $H>0$ with respect to the outer normal, and that it admits a graphical isometric embedding into a Kottler manifold.
\begin{enumerate}
\item If the boundary $\Sigma_h$ is nonempty and has mean curvature $H\leq 2$ with respect to the inner normal, then $m_{BY}^S(\Sigma)\geq 0$.

\item The same conclusion holds if the boundary $\Sigma_h$ contains additional components of genus zero with $H\leq -2$.

\item Under the same hypotheses, if $m_{BY}^S(\Sigma)= 0$ then $\Sigma_h$ has one component and $(\Omega,g)$ is isometric to a region in a Kottler manifold.

\item If $\Sigma_h$ is a minimal surface and $\Sigma_h^1$ is the component with least area then a Penrose-type inequality holds
\begin{equation}\label{thm1.2}
m_{BY}^S(\Sigma)\geq \mathcal{C}\frac{|\Sigma_h^1|}{16\pi},
\end{equation}
where $\mathcal{C}=4\min_{\Sigma_h}\partial_{\upsilon}u>0$ in which $u$ a particular spacetime harmonic function and $\upsilon$ denotes the unit inner normal.
\end{enumerate}
\end{theorem}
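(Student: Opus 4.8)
The plan is to transplant the Shi--Tam gluing argument into the toroidal Kottler setting and then reduce the statement to a Penrose-type inequality for a manifold asymptotic to the toroidal Kottler end, which we prove by the spacetime harmonic function method. Throughout, the hypothesis $K>-1$ plays the role that $K>0$ (Euclidean) and $K>-1$ (hyperbolic) play in the original Shi--Tam theorems \cite{ShiTam,ShiTam2007}, while the topological hypotheses on $(\Omega,\Sigma,\Sigma_h)$ are exactly what force the relevant level sets to resemble tori. \emph{Step 1 (Shi--Tam extension).} Using the given graphical isometric embedding, realize $\Sigma$ as a graph $\Sigma_0$ over the $s=0$ slice of a suitably scaled Kottler manifold, and let $M_+$ denote the noncompact region on the side $s\to+\infty$. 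Foliate $M_+$ by surfaces $\Sigma_t$ obtained by sliding $\Sigma_0$ along the leaves $\{s=c\}$, so that $\Sigma_t$ sweeps out to the conformal toroidal infinity, and on $M_+$ build a metric $g_+=w^2\,dt^2+\gamma_t$, where $\gamma_t$ is the induced metric on $\Sigma_t$ and the positive function $w=w(\cdot,t)$ solves the quasilinear parabolic Shi--Tam equation imposing that the scalar curvature of $g_+$ be identically $-6$; the initial value $w|_{t=0}$ is chosen so that the $g_+$-mean curvature of $\Sigma_0$ equals the outer mean curvature $H$ of $\Sigma$ in $(\Omega,g)$. Positivity of $w|_{t=0}$ uses $H>0$, and long-time existence with the expected (averaged) asymptotics $w\to1$ uses $K>-1$ together with barriers and parabolic estimates, as in the Euclidean and hyperbolic cases.

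\emph{Step 2 (gluing and monotonicity).} Glue $(\Omega,g)$ to $(M_+,g_+)$ along $\Sigma=\Sigma_0$ to obtain a manifold $\widehat\Omega$ with inner boundary $\Sigma_h$, asymptotic to the toroidal Kottler end, and with scalar curvature $\ge-6$ in the distributional sense across the corner, since the two mean curvatures of $\Sigma$ agree. Along the foliation set $m(t)=\frac{1}{8\pi}\int_{\Sigma_t}V\,(H_0(t)-H_{g_+}(t))\,dA_t$, where $V=e^{s}$ and $H_0(t)$ is the mean curvature of $\Sigma_t$ inside the model Kottler manifold. Differentiating $m(t)$ and using the Shi--Tam equation together with the Gauss equation on $\Sigma_t$ and the propagated bound $K_{\Sigma_t}>-1$, one shows that $m(t)$ is monotone along the foliation, with $m_{BY}^S(\Sigma)=m(0)\ge\lim_{t\to\infty}m(t)$, the limit being the total mass of the asymptotically Kottler end of $\widehat\Omega$. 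Hence (1) and (2) reduce to the assertion that this end-mass is $\ge0$, and (4) to the assertion that it is $\ge\mathcal C\,|\Sigma_h^1|/16\pi$.

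\emph{Step 3 (positivity and Penrose inequality via spacetime harmonic functions).} This is the heart of the matter, and here a naive positive mass theorem is \emph{false}: toroidal Kottler ends admit complete metrics of scalar curvature $-6$ with negative mass (Horowitz--Myers type solitons), which is precisely why a secondary boundary component cannot be removed. On $\widehat\Omega$, smoothed at the corner, solve for a proper function $u$ satisfying the spacetime-harmonic-type equation appropriate to this curvature normalization --- modeled on the Kottler potential $V=e^{s}$, which satisfies $\Delta V=3V$ --- with $u$ constant on $\Sigma_h$ and $u\to+\infty$ toward the end. The homotopy condition and $H_2(\Omega,\Sigma_h;\mathbb{Z})=0$ guarantee that the regular level sets $\Sigma_t=\{u=t\}$ are connected and have non-positive Euler characteristic, so Gauss--Bonnet gives $\int_{\Sigma_t}K_{\Sigma_t}\le0$; in particular no spherical level set occurs, which is what neutralizes the Horowitz--Myers obstruction. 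Integrating the Bochner--Stern identity for $u$ over $\widehat\Omega$, inserting $R\ge-6$ through the Gauss equation of the level sets (the normalization being arranged so the constant curvature terms cancel), controlling the level-set integrals by the coarea formula, and, in cases (1) and (2), discarding the boundary contribution at $\Sigma_h$ --- which carries the correct sign precisely because $H\le2$ there, respectively $H\le-2$ on any genus-zero component --- yields $\lim_{t\to\infty}m(t)\ge0$. In case (4), $\Sigma_h$ is minimal, so retaining rather than discarding its boundary term produces the sharpened inequality with constant $\mathcal C=4\min_{\Sigma_h}\partial_\upsilon u$, where $\partial_\upsilon u>0$ on $\Sigma_h$ by the Hopf lemma applied to $u$.

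\emph{Step 4 (rigidity) and the principal obstacle.} If $m_{BY}^S(\Sigma)=0$, every inequality in the chain is saturated: monotonicity of $m(t)$ forces $(M_+,g_+)$ to be the model Kottler region, equality in the Bochner--Stern identity forces $\hess u$ to be pure trace with the level sets totally geodesic flat tori and $u$ affine in the Kottler coordinate $s$, and vanishing of the $\Sigma_h$ term forces $\Sigma_h$ to be a single level set of $s$, hence connected; reassembling the foliation data identifies $(\Omega,g)$ with a region in a Kottler manifold. The main obstacle is Step 3: one must extract, quantitatively and from the trapping condition on $\Sigma_h$ together with the topological hypotheses, enough control on the level-set geometry of $u$ to defeat the Horowitz--Myers obstruction --- excluding positive $\int_{\Sigma_t}K_{\Sigma_t}$ contributions and pinning down the sign and size of the $\Sigma_h$ boundary term --- while simultaneously controlling the delicate asymptotics of $u$ and its level sets as the toroidal end expands; a subsidiary technical point is the long-time existence and precise asymptotics of the Shi--Tam extension in the toroidal Kottler geometry.
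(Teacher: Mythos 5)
Your proposal follows essentially the same route as the paper: a Bartnik--Shi--Tam extension built over the Kottler isometric embedding with mean curvature matched at the corner via the choice of $w_0$, monotonicity of the static Brown--York mass along the foliation and its convergence to the total mass of the asymptotically hyperbolic toroidal end, and then a positive mass theorem with corners proved by spacetime harmonic functions (which the paper imports from its earlier work and from Tsang rather than reproving, with your Step 3 sketch matching how those results are established, including the role of the homotopy condition in excluding spherical level sets and the sign of the $\Sigma_h$ boundary term under $H\leq 2$). The one correction worth noting is that the foliation must be the unit normal flow emanating from $\Sigma_0$ — so that the ambient Kottler metric takes the Gaussian form $dt^2+\gamma_t$ required for the ansatz $g_+=w^2dt^2+\gamma_t$ — rather than a vertical slide of the graph along the $s$-level sets.
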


\begin{remark}
These results fail to hold if the auxiliary boundary component $\Sigma_h$ does not satisfy the mean curvature condition, or $\Omega$ does not satisfy the homotopy condition. Explicit examples are given in the appendix to illustrate this point.
\end{remark}

The proof will rely on a Shi-Tam style extension argument, together with the so called \textit{spacetime harmonic functions} that satisfy the equation $\Delta u=3|\nabla u|$ and which were introduced in the context of the asymptotically flat spacetime positive mass theorem \cite{HKK}.
These functions were later employed to treat the asymptotically locally hyperbolic case of a toroidal infinity by the current authors \cite{AlaeeKhuriHung}, and Theorem \ref{thm:PMT} may be viewed as a localization of those results analogous to the localizations of global positive mass theorems obtained in \cite{AlaeeKhuriYau3,LuMiao,ShiTam,ShiTam2007,WangYau2007}. The rigidity statement in Theorem \ref{thm:PMT} is also related to those of Eichmair-Galloway-Mendes \cite[Corollary 1.4]{EGM} and Yau \cite[Theorem 3.2]{Yau}, who obtained the same conclusion with different hypotheses involving a pointwise inequality for the mean curvature of $\Sigma$ instead of the quasi-local mass condition. Note that the pointwise assumption on the mean curvature of $\Sigma_h$ can be interpreted as a \textit{trapping condition}, whereby in a spacetime setting with umbilic initial data the null-mean curvature of this surface is nonpositive, signaling a trapped surface that is associated with black holes. 

This paper is outlined as follows. In section \ref{sec:flow}, we prove global existence and asymptotics of the unit normal flow within the Kottler manifold emanating from the isometric embedding of $\Sigma$. Section \ref{Sec:BSTextension} is dedicated to the construction of a Bartnik-Shi-Tam asymptotically hyperbolic extension with toroidal infinity, while in Section \ref{Sec:asymptotes} it is shown that the static Brown-York mass of the extension's leaves converges to the total mass. A positive mass theorem with corners for asymptotically hyperbolic manifolds with toroidal infinity is then established with the aid of spacetime harmonic functions, and Theorem \ref{thm:PMT} is proved in the final Section \ref{Sec:mainthm}. Lastly, an Appendix \ref{appA} is included to provide counterexamples to the conclusions of the main theorem if the hypothesis of nonempty $\Sigma_h$ is removed.

\section{Unit Normal Flow }\label{sec:flow}
\setcounter{equation}{0}
\setcounter{section}{2}

In this section we consider the asymptotics and convergence of the unit normal flow in a Kottler manifold, emanating from a graphical surface with positive semidefinite second fundamental form. Let $M_k=(\mathbb{R}\times T^2 , b=ds^2 +e^{2s}\sigma)$ be a Kottler manifold with flat metric $\sigma=\sigma_{ij}d\theta^i d\theta^j$ on the 2-torus where $\sigma_{ij}$ are constants and $\theta^i$ are circle coordinates, and consider a graph over the $s$-level sets
\begin{equation}
\Sigma_0=\{ (v(\theta),\theta)\, |\, \theta=(\theta^1,\theta^2)\in T^2 \}
\end{equation}
given by a smooth function $v$ on $T^2$.
%Let $u(\theta)=-e^{-v(\theta)}$. 
Then the induced metric on $\Sigma_0$  as well as its inverse are given by
\begin{equation}\label{gamma1}
\gamma_{ij}=e^{2v}\sigma_{ij}+v_i v_j,\quad\quad\quad \gamma^{ij}=e^{-2v}\sigma^{ij}-\rho^{-2}
e^{-4v} v^i v^j,
\end{equation}
where $v_i=\partial_{\theta^i}v$, $v^i =\sigma^{ij}v_j$, and
$\rho^2=1+e^{-2v}|\nabla v|_{\sigma}^2$. Moreover,
the unit normal vector to $\Sigma_0$ pointing towards infinity is
\begin{equation}
\nu=\rho^{-1}\left( \partial_s-e^{-2v}v^i\partial_{\theta^i} \right),
\end{equation}
while the second fundamental form may be expressed as
\begin{equation}\label{equ:2ndff}
h_{ij}=b(\pmb{\nabla}_i \text{ }\!\nu,\partial_{\theta^j})=\rho^{-1}\left( -\nabla_{ij}v+2v_i v_j+e^{2v}\sigma_{ij} \right)
\end{equation}
where $\pmb\nabla$ and $\nabla$ are the Levi-Civita connections with respect to $b$ and $\sigma$, respectively. 
Let $F:[0,t_0)\times T^2\to M_k$ be a 1-parameter family of graphical embeddings with $F(t,\bar{\theta})=(v(t,\Theta(t,\bar{\theta})),\Theta(t,\bar{\theta}))$ which satisfies the unit normal flow equation
\begin{equation}\label{equ:unit_flow}
\partial_t F=\nu,
\end{equation}
where $\Theta(t,\cdot):T^2 \rightarrow T^2$ is a 1-parameter family of diffeomorphisms.
We may write $\Sigma_t=F(t,T^2)$, and for convenience will often continue to denote quantities relevant to $\Sigma_t$ as above without reference to $t$.
\begin{comment}
Let $F:[0,t_0)\times T^2\to M_k$ be a 1-parameter family of embeddings which satisfies the unit normal flow equation
\begin{equation}\label{equ:unit_flow}
\partial_t F=\nu. 
\end{equation}
{\color{red}We further assume for each $t\in [0,t_0)$, the image $\Sigma_t=F(t,T^2)$ is a graph. Then for each $t\in [0,t_0)$ there exists a height function $v(t,\cdot):T^2\to\mathbb{R}$ and a diffeomorphism
$\Theta(t,\cdot):T^2\to T^2$ such that $$F(t,\bar{\theta})=(v(t,\Theta(t,\bar{\theta})),\Theta(t,\bar{\theta}))$$
}
For convenience, we will often continue to denote quantities relevant to $\Sigma_t$ as above without reference to $t$. 
\end{comment}

\begin{prop}\label{pro:longtime}
Suppose that the second fundamental form of the initial surface $\Sigma_0 \subset M_k$ is positive semidefinite. Then the unit normal flow exists for all time as a smooth graph, and the translated graph functions $v(t,\cdot)-t$ converge in $C^{\ell}(T^2)$ for any $\ell$ to a smooth function on the torus. Moreover, the maps $\Theta(t,\cdot)$ converge smoothly to a limiting diffeomorphism.
\end{prop}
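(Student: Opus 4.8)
The plan is to recognize the unit normal flow as the equidistant foliation of $\Sigma_0$, to exploit the constant sectional curvature $-1$ of $M_k=\mathbb{H}^{3}/\Gamma$, and then to extract convergence from the first--order equation satisfied by the graph height. Since $|\partial_t F|_b=|\nu|_b=1$ and, as is standard for equidistant foliations, each trajectory $t\mapsto F(t,p)$ is a unit--speed normal geodesic, one has $F(t,p)=\exp_p\!\big(t\,\nu(0,p)\big)$, defined for all $t\ge 0$ by completeness; the flow persists as a smooth immersed surface exactly as long as $F(t,\cdot)$ is an immersion, i.e.\ until a focal point of $\Sigma_0$ is reached. This is where the hypothesis enters essentially: along each normal geodesic the Jacobi endomorphism $\mathcal{A}(t)$ with $\mathcal{A}(0)=\mathrm{Id}$, $\mathcal{A}'(0)=S_0$ (the shape operator of $\Sigma_0$) solves $\mathcal{A}''=\mathcal{A}$, so $\mathcal{A}(t)=\cosh t\,\mathrm{Id}+\sinh t\,S_0$, and positive semidefiniteness of the second fundamental form means $S_0\ge 0$, whence every eigenvalue of $\mathcal{A}(t)$ is $\ge\cosh t>0$ for all $t\ge 0$ and there are no focal points. (A principal curvature below $-1$ would instead force a focal point at finite time, so some such assumption is unavoidable.) In particular $F(t,\cdot)$ is an immersion for all $t$, and the shape operator $S(t)=\mathcal{A}'(t)\mathcal{A}(t)^{-1}$ has eigenvalues $\tfrac{\sinh t+\lambda_i\cosh t}{\cosh t+\lambda_i\sinh t}\in[\min(\lambda_i,1),\max(\lambda_i,1)]$ converging to $1$ exponentially.

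Next I would show that each leaf stays a graph by a continuity (open--closed) argument. Wherever $\Sigma_t=\{s=v(t,\theta)\}$ is a graph, the height obeys $\partial_t v=\rho=\sqrt{1+e^{-2v}|\nabla v|_{\sigma}^2}$ (since the normal speed is $1$ and $b(\partial_s,\nu)=\rho^{-1}$). Thus $\partial_t v\ge 1$, so $v(t,\cdot)\ge t+\min_{T^2}v_0$; and differentiating gives $\partial_t|\nabla v|_{\sigma}^2=\tfrac{e^{-2v}}{\rho}\big(\langle\nabla v,\nabla|\nabla v|_{\sigma}^2\rangle_{\sigma}-2|\nabla v|_{\sigma}^4\big)$, which at an interior spatial maximum is $\le 0$, so by Hamilton's trick $|\nabla v(t,\cdot)|_{\sigma}\le C_0:=\|\nabla v_0\|_{C^{0}}$ for all such $t$. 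Consequently $\rho^2-1=e^{-2v}|\nabla v|_{\sigma}^2\le C_0^2\,e^{-2\min v_0}\,e^{-2t}$, so $\rho\le\rho_{\max}<\infty$ and $\rho\to 1$ exponentially. Since $b(\partial_s,\nu)=\rho^{-1}\ge\rho_{\max}^{-1}>0$ is an open, sign--definite condition that is preserved under the uniform a priori bound and under $C^1$ limits of leaves, the set of times for which $\Sigma_t$ is a graph is open, closed, and contains $0$, hence equals $[0,\infty)$; each leaf is then embedded, and the leaves are pairwise disjoint because $v(t,\theta)$ is strictly increasing in $t$. Together with the first paragraph this yields global existence as a smooth graph.

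Finally, for the convergence I set $u:=v-t$, so that $\partial_t u=\rho-1=\dfrac{e^{-2(t+u)}|\nabla u|_{\sigma}^2}{\rho+1}\ge 0$ with $\partial_t u=O(e^{-2t})$; hence $u(t,\cdot)$ is nondecreasing, bounded above, and converges uniformly (exponentially) to some $v_\infty$. The decisive structural point is the universal prefactor $e^{-2(t+u)}$ in $\partial_t u$: upon differentiating, for each multi--index $\alpha$ the evolution of $\partial^\alpha u$ takes the schematic form $\partial_t\partial^\alpha u=e^{-2v}\big(A^i\,\partial_i\partial^\alpha u+(\text{terms linear in the }(|\alpha|{+}1)\text{-st derivatives})+R_\alpha\big)$, where $A^i$, the linear coefficients, and $R_\alpha$ are bounded in terms of $\rho^{-1}\le 1$ and of derivatives of $u$ of order $\le|\alpha|$. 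Inducting on $|\alpha|$ with $C^{|\alpha|}$ bounds in hand, the top--order transport term drops out at a spatial maximum of $\sum_{|\alpha|=k+1}|\partial^\alpha u|^2$, so the maximum principle together with Gr\"onwall (using $\int_0^\infty e^{-2t}\,dt<\infty$) gives a uniform $C^{k+1}$ bound; that bound then makes $\partial_t\partial^\alpha u=O(e^{-2t})$ pointwise, so each $\partial^\alpha u(t,\cdot)$ converges uniformly. Hence $v(t,\cdot)-t\to v_\infty$ in $C^\ell(T^2)$ for every $\ell$, with $v_\infty$ smooth. For the reparametrizations, once $v(t,\cdot)$ is known, $\Theta(t,\cdot)$ satisfies the decoupled ODE $\partial_t\Theta=-\rho^{-1}e^{-2v}\nabla_\sigma v$, whose right--hand side is $O(e^{-2t})$ and thus integrable in $t$; differentiating and using the bounds above shows $\Theta(t,\cdot)$ converges in $C^\infty$ to a smooth limit $\Theta_\infty$, and since the time derivative of $\log\det d\Theta_t$ is $O(e^{-2t})$ the limit has nondegenerate differential and degree one, hence is a diffeomorphism. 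The crux of the whole argument is the focal--point step, which is exactly where positive semidefiniteness of the second fundamental form is used; everything downstream is a maximum principle/Gr\"onwall bootstrap made possible by the exponential gain $e^{-2v}=O(e^{-2t})$ in the height equation.
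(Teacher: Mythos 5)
Your proposal is correct and reaches all four conclusions of the proposition, but it takes a genuinely different route from the paper in two places. For global existence, the paper proves short-time existence of the graph system $(v,\Theta)$ by the implicit function theorem and continues it via a priori estimates; you instead observe that the flow is the normal exponential map, which exists for all time by completeness and stays an immersion because the Jacobi operator $\cosh t\,\mathrm{Id}+\sinh t\,S_0$ is invertible when $S_0\ge 0$, and then recover graphicality by an open--closed argument on the sign of $b(\partial_s,\nu)$ -- this cleanly isolates where positive semidefiniteness is used, whereas in the paper that hypothesis enters through the explicit solution of the Riccati equation $\partial_t h_i^j=-h_i^kh_k^j+\delta_i^j$ (your eigenvalue formula $\tfrac{\sinh t+\lambda_i\cosh t}{\cosh t+\lambda_i\sinh t}$ is exactly that solution in Jacobi-field language, and you do need the resulting $|h-\gamma|_\gamma=O(e^{-2t})$, which is quoted later in the paper as \eqref{hgamma}). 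For the derivative estimates, the paper integrates the transport equation for $\rho^2-1$ along characteristics and then inducts on covariant derivatives $|D^m\beta|_\gamma$ of $\beta=h-\gamma$, converting back to $|\nabla^{m+2}v|_\sigma$; you instead run Hamilton's maximum-principle trick on $|\nabla v|_\sigma^2$ (your evolution identity checks out, and combined with $v\ge t+\min v_0$ it gives the same $\rho^2-1=O(e^{-2t})$) and then bootstrap coordinate derivatives of $u=v-t$ directly by maximum principle plus Gr\"onwall, using the integrable $e^{-2v}$ prefactor. This is more elementary and avoids the tensor calculus, at the cost of losing the sharper rates $|D^m\beta|_\gamma\le C_me^{-(2+m)t}$ (not needed for the proposition itself). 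The one step you should write out rather than assert is that \emph{all} top-order terms in $\partial_t\partial^\alpha u$ arise from every derivative in $\alpha$ landing on a single factor of $\nabla u$, hence have the pure transport form $A^j\nabla_j\partial^\alpha u$ and pair into $\tfrac12 A^j\nabla_j\sum_\alpha(\partial^\alpha u)^2$, which vanishes at a spatial maximum; your treatment of $\Theta_\infty$ (integrable ODE, bounded $\log\det d\Theta_t$, degree one, covering of degree one) is essentially identical to the paper's.
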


\begin{proof}
First note that local in time existence follows from a standard implicit function theorem argument for this flow. To obtain long time existence we will establish uniform a priori estimates on a maximal time interval $[0,t_0)$. Notice that the evolution equation \eqref{equ:unit_flow} is equivalent to $\dot{v}=\rho^{-1}$, and $\dot{\theta}^i=-\rho^{-1}e^{-2v}v^i$ where dot denotes differentiation in time, and thus
\begin{equation}\label{equ:unit_flow_graph}
\partial_t v=\dot{v}-\dot{\theta}^i \partial_{\theta^i}v=\rho=\left(1+e^{-2v}|\nabla v|_{\sigma}^2\right)^{1/2}.
\end{equation}
This shows that $v$ is nondecreasing, and that $v-t$ satisfies the transport equation
%together with {\color{red}Lemma~\ref{lem:standard}}, 
\begin{equation}
\partial_t (v-t)=(\rho+1)^{-1}e^{-2v}v^i \partial_{\theta^i}(v-t).
\end{equation}
Integrating along characteristics yields an upper bound while the monotonicity gives a lower bound, namely
\begin{equation}\label{equ:C0}
v_{\min}(0)\leq v_{\min}(t)\leq  v_{\max}(t)\leq v_{\max}(0)+t,
\end{equation}
where $v_{\max}(t)=\max_{\theta\in T^2} v(t,\theta)$ and $v_{\min}(t)=\min_{\theta\in T^2} v(t,\theta)$. Furthermore, control on
first derivatives may be obtained from the computation
\begin{align}
\begin{split}
\partial_t(\rho^2-1)=&-2(\partial_t v)e^{-2v}|\nabla v|_{\sigma}^2 +2e^{-2v}v^i\partial_{\theta^i}(\partial_t v)\\
=&-2\rho(\rho^2-1)-\rho^{-3}e^{-2v}v^i\partial_{\theta^i}(\rho^2 -1),
\end{split}
\end{align}
where \eqref{equ:unit_flow_graph} has been used. 
%{\color{red}Let $\rho_{\max}(t)=\max_{\theta\in\mathbb{T}^2}\rho(t,\theta)$. In view of %Lemma~\ref{lem:standard} and $\rho\geq 1$, we have $(\rho^2_{\max}(t)-1)'\leq -2 (\rho^2_{\max}%(t)-1)$ whenever it is differentiable and $\rho^2_{\max}(t)-1\leq Ce^{-2t}$ with $C=\rho^2_{\max}%(0)-1$.} 
More precisely, viewing this as a transport equation
for $\rho^2 -1$ and integrating again along characteristics while utilizing $-2\rho(\rho^2-1)\leq -2(\rho^2- 1)$ produces
\begin{equation}\label{oingoqinoginqh}
\rho^2 -1\leq Ce^{-2t}
\end{equation}
where the constant $C$ depends only on $\rho|_{\Sigma_0}$.
\begin{comment}
{\color{red} Let $\rho_{\max}(t)=\max_{\theta\in\mathbb{T}^2}\rho(t,\theta),$ then we have
\begin{equation}\label{equ:rho_max}
\frac{d}{dt}(\rho_{\max}(t)-1)\leq -2\rho_{\max}^{-1}(t)(\rho_{\max}^2(t)-1).    
\end{equation}
In particular, $\rho_{\max}(t)$ is non-increasing and $\rho_{\max}(t)\leq \rho_0= \rho_{\max}(0)$. Combining this with \eqref{equ:rho_max}, we immediately get $ \rho_{\max}^2(t)-1\leq Ce^{-2\rho_0^{-1}t}$ and equivalently $\rho^{-1}_{\max}(t)\geq 1-Ce^{-2\rho_0^{-1}t}$. Plugging this into \eqref{equ:rho_max} yields 
\begin{equation}
\frac{d}{dt}(\rho_{\max}(t)-1)\leq -2(\rho_{\max}^2(t)-1)+Ce^{-2\rho^{-1}_0 t}(\rho_{\max}^2(t)-1).  
\end{equation}
It is then straightforward to get
\begin{align}\label{equ:rho_decay}
\rho_{\max}(t)^2-1\leq Ce^{-2t}
\end{align}} 
\end{comment}
%From $-2\rho(\rho^2-1)\leq -2(\rho^2-1)$, we derive
%\begin{align}\label{equ:rho_decay}
%\rho^2-1\leq Ce^{-2t}, 
%\end{align}
%where $C$ is the initial maximum of $\rho^2-1$. 
Therefore \eqref{equ:C0} yields
\begin{equation}\label{equ:C1}
|\nabla v|_{\sigma}^2 =e^{2v}(\rho^2 -1)\leq Ce^{2v_{\max}(0)}.
\end{equation}
	
To obtain $C^2$ bounds, we will employ the second fundamental form. Observe that since the ambient metric has constant sectional curvature $-1$, the relevant evolution equation is
\begin{equation}\label{oinfoiqnh}
\partial_t h_{i}^j=-h^k_i h^j_k+\delta_i^j.
\end{equation} 
This equation admits an explicit solution, which when interpreting the $(1,1)$-tensors as matrices may be expressed as
\begin{equation}\label{oinqoinoiqnh}
h(t)=\delta+2e^{-2t}(h(0)-\delta)\left[h(0)+\delta-(h(0)-\delta)e^{-2t}\right]^{-1}.
\end{equation}
Since the initial second fundamental form $h(0)$ is positive semidefinite (with respect to $\gamma(0)$), we find that $h(t)$ inherits this property for all future times, moreover
\begin{comment}
This ensures the second fundamental form remains non-negative definite. The ODE
\begin{align*}
\left\{\begin{array}{c}
\lambda'(t)=-\lambda(t)^2+1, \\
\lambda(0)=\lambda_0
\end{array} \right.
\end{align*}
has an explicit solution
\begin{align*}
\lambda(t)=1+\frac{(2\lambda_0-2)e^{-2t}}{\lambda_0+1-(\lambda_0-1)e^{-2t}}.
\end{align*}{\color{red} The other way to express the solution is $\lambda(t)=\coth\left(a_0+t\right)$ with $\coth(a_0)=\lambda_0$. Actually this does not cover all possibilities, ie. $\lambda_0 <1$.}
\end{comment}
\begin{equation}\label{hgamma}
|h-\gamma|_\gamma \leq Ce^{-2t}.
\end{equation}
Next note that \eqref{gamma1} and \eqref{equ:2ndff} show that
\begin{equation}\label{foniqoinoinqh}
\nabla_{ij}v=-\rho h_{ij}+2v_i v_j +e^{2v}\sigma_{ij}
=-\rho(h_{ij}-\gamma_{ij})+(2-\rho)v_i v_j +(1-\rho)e^{2v}\sigma_{ij}.
\end{equation}
Hence \eqref{equ:C0}, \eqref{equ:C1}, and \eqref{hgamma} imply that
\begin{equation}
|\nabla^2 v|_{\sigma}\leq C,
\end{equation}
where $C$ depends only on $\Sigma_0$.

In order to estimate higher order derivatives, let $D$ be the Levi-Civita connection of $\gamma$ and set $\beta_{ij}=h_{ij}- \gamma_{ij}$.
Then we claim that
\begin{equation}\label{lem:beta}
|D^m \beta|_\gamma\leq C_m e^{-(2+m)t}
\end{equation}
for any integer $m\geq 0$, where the constants $C_m$ only have dependence on $m$ and $\Sigma_0$. To see this, first note that the case $m=0$ has already been established in \eqref{hgamma}.
Proceeding by induction, assume that the estimate holds for integers up to and including $m-1$. Observe that \eqref{oinfoiqnh} together with $\partial_t \gamma_{ij}=2h_{ij}$ produces
\begin{equation}
\partial_t \beta_{ij}=\beta_{ik}\beta^k_j,
\end{equation}
and a direct computation gives
\begin{equation}
\partial_t \gamma( D^m \beta,D^m \beta)=\left(\partial_t \gamma\right)( D^m \beta,D^m \beta)+2\gamma\left( \left[ \partial_t, D^m \right]\beta, D^m\beta \right)+2\gamma\left( D^m\partial_t\beta, D^m\beta \right). 
\end{equation}
Each term from this expression will be analyzed separately. In particular, using $\partial_t \gamma^{ij}=-2\gamma^{ij}-2\beta^{ij}$ yields
\begin{equation}
\left(\partial_t \gamma\right)( D^m \beta,D^m \beta)=-2(m+2)\gamma ( D^m \beta,D^m \beta)+\beta\ast D^m \beta\ast D^m \beta.
\end{equation}
Furthermore, since the time derivative of Christoffel symbols for $\gamma$ takes the form
\begin{equation}
\partial_t \Gamma_{ij}^k =\gamma^{kl}\left(D_i h_{jk} +D_j h_{ik}-D_l h_{ij}\right)
=\gamma^{kl}\left(D_i \beta_{jk} +D_j \beta_{ik}-D_l \beta_{ij}\right),
\end{equation}
we find that
\begin{equation}
\left[ \partial_t, D^m \right]\beta=\sum_{p=0}^{m-1}D^{m-p}\beta\ast D^{p}\beta,
\end{equation}
and hence
\begin{equation}
\gamma\left( \left[ \partial_t, D^m \right]\beta, D^m\beta \right)=\sum_{p=0}^{m-1}D^{m-p}\beta\ast D^{p}\beta\ast D^m\beta. 
\end{equation} 
Next, observe that the evolution equation for $\beta$ shows
\begin{equation}
\gamma\left( D^m\partial_t \beta, D^m\beta \right)=\sum_{p=0}^{m}D^{m-p}\beta\ast D^p\beta\ast D^m\beta,
\end{equation}
and therefore by combining the above we find
\begin{equation}
\partial_t \gamma( D^m \beta,D^m \beta)=-2(m+2)\gamma ( D^m \beta,D^m \beta)+\beta\ast D^m \beta\ast D^m \beta +\sum_{p=1}^{m-1}D^{m-p}\beta\ast D^p\beta\ast D^m\beta.
\end{equation}
It then follows from the induction hypothesis that
\begin{equation}
\partial_t |D^m \beta|_\gamma^2 \leq ( -2(m+2)+C e^{-2t})|D^m \beta|_\gamma^2 +C e^{-2(m+4)t} |D^m\beta|_\gamma,
\end{equation}
where $C$ is a constant depending only on $m$. Treating this as a linear first order ODE inequality for $|D^m \beta|_{\gamma}$ immediately gives rise to the desired estimate \eqref{lem:beta}, and the formula \eqref{foniqoinoinqh} then yields
\begin{equation}\label{aoifnoinoih}
|\nabla^{m+2} v|_{\sigma}\leq C_m.
\end{equation}

We may now bootstrap equation \eqref{equ:unit_flow_graph} to obtain uniform bounds for derivatives in time and space
\begin{equation}
|\partial_t^l \nabla^m v|_{\sigma}\leq C_{lm},
\end{equation}
for some $C_{lm}$ dependent only on $l$, $m$, and $\Sigma_0$. Standard arguments then show that the flow may be extended smoothly beyond the maximal time $t_0$, and therefore the solution exists for all time.

It remains to establish convergence	of the translated graphs. Notice that \eqref{oingoqinoginqh} implies
\begin{equation}
|\partial_t (v-t)|=\rho-1\leq Ce^{-2t}.
\end{equation}
It follows that $v(t,\cdot)-t$ is Cauchy in $C^0(T^2)$ as $t\rightarrow\infty$, and therefore converges to a continuous limit. Furthermore, differentiating the equation \eqref{oingoqinoginqh} and using \eqref{aoifnoinoih} leads to
\begin{equation}
|\partial_t \nabla^m (v-t)|_{\sigma}\leq C_m e^{-2t},
\end{equation}
from which we obtain convergence of all higher order derivatives.

Lastly, consider the second initial value problem defining the normal flow
\begin{equation}\label{oanoinoih}
\partial_t\Theta=-\rho^{-1}e^{-2v}\nabla v=: \mathcal{U}(t,\Theta),\quad\quad \Theta(0,\cdot)=\mathrm{Id},
\end{equation}
where $\mathrm{Id}:T^2 \rightarrow T^2$ is the identity map. Since the vector fields $\mathcal{U}$ are smooth and the torus is compact, a unique solution consisting of a 1-parameter family of diffeomorphisms exists for all time, and is smooth in both time and space. Note that by the estimates above for the graph function $v$, we have $|\mathcal{U}|_{\sigma}\leq Ce^{-2t}$ globally for some constant $C$. It follows from \eqref{oanoinoih} that $\Theta(t,\cdot)$ is Cauchy in $C^{0}(T^2,T^2)$ as $t\rightarrow\infty$, and thus converges to a continuous limiting map. To obtain higher order convergence, differentiate the evolution equation to find
\begin{equation}\label{fohqaoinoinhoi}
\partial_t\left(\partial_{\bar{\theta}^i}\Theta\right)=\left(\nabla_{\partial_{\theta^j}}\mathcal{U}\right)\left(\partial_{\bar{\theta}^i}\Theta^j \right).
\end{equation}
Viewing this as a first order linear ODE for the vector field $\partial_{\bar{\theta}^i}\Theta$, in which the coefficients satisfy $|\nabla\mathcal{U}|_{\sigma}\leq Ce^{-2t}$, we find that this vector field is uniformly bounded in time. Using this fact in the equation \eqref{fohqaoinoinhoi} then implies that $\Theta(t,\cdot)$ is Cauchy in $C^{1}(T^2,T^2)$ as $t\rightarrow\infty$, and thus converges to a continuously differentiable limiting map $\Theta_{\infty}$. Continuing in this way with a bootstrap argument yields the desired smooth convergence. In order to show that the limiting map is a diffeomorphism, first observe that homotopy invariance of the degree implies that $\mathrm{deg} \text{ }\!\Theta_{\infty}=1$, and hence it is surjective. We next claim that this map is a local diffeomorphism. Assume by way of contradiction that the Jacobian determinant is zero at a point $\bar{\theta}\in T^2$, then there exists a nonzero pair of constants such that $\Sigma_i c^i\partial_{\bar{\theta}^i}\Theta_{\infty}(\bar{\theta})=0$. Consider the time dependent vectors at this point $X(t)=\Sigma_i c^i\partial_{\bar{\theta}^i}\Theta(t,\bar{\theta})$, and note that \eqref{fohqaoinoinhoi} implies
\begin{equation}
|\partial_t \log |X|_{\sigma}|\leq Ce^{-2t},
\end{equation}
which yields a contradiction for sufficiently large $t$. We conclude that the Jacobian determinant is always nonzero for the limiting map, and hence by the inverse function theorem it is a local diffeomorphism. Because $\Theta_{\infty}:T^2 \rightarrow T^2$ is also proper, it is a covering map. Furthermore, since $\Theta(t,\cdot)$ and $\Theta_{\infty}$ are homotopic, after conjugation they induce the same homomorphism between fundamental groups, so that their respective image subgroups have the same index. It follows that the degree of the cover is 1 which implies $\Theta_{\infty}$ is injective, and therefore is a diffeomorphism.
\end{proof}
	
\section{A Batrnik-Shi-Tam Extension}\label{Sec:BSTextension}
\setcounter{equation}{0}
\setcounter{section}{3}	

In this section we will construct an asymptotically hyperbolic extension of the compact manifold $\Omega$ by utilizing the isometric embedding of its toroidal boundary component
$\Sigma$ into the Kottler manifold, as well as the unit normal flow from the previous section. This is analogous to the extensions employed by Bartnik \cite{Bartnik} and Shi-Tam \cite{ShiTam, ShiTam2007}
in the case of spheres. Let $\{\Sigma_t\}_{t\in[0,\infty)}$ be the leaves of the unit normal flow emanating from the isometric image of $\Sigma$, and denote the corresponding induced metrics, Gauss curvatures, mean curvatures, second fundamental forms, and principal curvatures by $\gamma_t$, $K_t$, $H_t$, $h_t$, and $\lambda_1(t)$, $\lambda_2(t)$. 
Since the Kottler metric $b$ is of constant curvature we have $\mathrm{Ric}_b(\nu,\nu)=-2$ where $\nu$ is the unit normal to $\Sigma_t$, and therefore taking two traces of the Gauss equations yields
\begin{equation}\label{eqn.1.1}
2\lambda_1(t)\lambda_2(t)=H_t^2-|h_t|_{\gamma_t}^2=2\text{Ric}_{b}(\nu,\nu)+(2K_t+6)= 2K_t+2.
\end{equation}
It follows that if $K_0 > -1$ then $\Sigma_0$ has positive definite second fundamental form, since both $\lambda_1(0)$ and $\lambda_2(0)$ cannot be negative as this would lead to a contradiction with \eqref{equ:2ndff} by taking a trace and applying the maximum principle. Proposition \ref{pro:longtime} then shows that the flow exists for all time, and \eqref{oinqoinoiqnh} guarantees that the principal curvatures remain positive throughout, which in turn preserves the Gauss curvature bound $K_t >-1$.

The normal flow produces a foliation of $M_{k}$ outside of $\Sigma_0$. 
This region in parametric space will be denoted by $\Omega_+=F^{-1}\left(\cup_{t=0}^\infty\Sigma_t\right)$, and the Kottler metric there may be expressed as $F^*b=dt^2+\gamma_t$.
Then a Bartnik-Shi-Tam extension of $\Omega$ is the Riemannian manifold $(\Omega_+,g_+)$ with
\begin{equation}\label{gplus}
g_+=w^2dt^2+\gamma_t,
\end{equation} 
where the function $w$ satisfies the evolution equation
\begin{equation}\label{PDEu}
\begin{cases}
H_t \partial_t w=w^2\Delta_t w+\left(K_t+3\right)\left(w-w^3\right) & \text{ on $\Omega_+$}\\
w(0,\bar{\theta})=w_0(\bar{\theta})>0
\end{cases},
\end{equation}
in which $\Delta_t$ is the Laplace-Beltrami operator for $\gamma_t$.
This equation guarantees \cite[(2.10)]{WangYau2007} that the scalar curvature of the extension is $R_{g_+}=-6$.
%where $w_0=H_0 /H$

\begin{lemma}\label{ogoinoinhoih}
Suppose that the surface $\Sigma_0\subset M_k$ admits the Gauss curvature lower bound $K_0 >-1$. 
Then there exists a unique smooth positive solution to \eqref{PDEu} for all time, and
\begin{equation}\label{limit1}
\lim_{t\to\infty}e^{3t}\left(w-1\right)=\mathbf{w}_{\infty}
\end{equation}
for some $\mathbf{w}_{\infty}\in C^{\infty}(T^2)$. Moreover, this convergence occurs in $C^{\ell}(T^2)$ for any $\ell$, and time derivatives of this quantity decay at the rate $e^{-2t}$.
\end{lemma}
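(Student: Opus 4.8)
The plan is to treat \eqref{PDEu} as a quasilinear parabolic equation on the torus and to read off its long-time behavior from the maximum principle together with the geometric decay of Proposition~\ref{pro:longtime}. First I would note that $w\equiv 1$ is a stationary solution of \eqref{PDEu}, and that the equation is uniformly parabolic wherever $w>0$: the leading coefficient is $w^2$, while $H_t>0$ on all of $\Omega_+$ because the second fundamental form of $\Sigma_0$ is positive definite when $K_0>-1$ and remains so along the flow, as recorded below \eqref{eqn.1.1}. Local existence and uniqueness for smooth positive initial data then follow from standard quasilinear parabolic theory. Comparing with the stationary solution via the maximum principle---tracking a spatial maximum and a spatial minimum of $w$ and using $K_t+3>2>0$---gives the two-sided bound $\min(1,\inf w_0)\le w(t,\cdot)\le\max(1,\sup w_0)$, so $w$ stays in a fixed compact subinterval of $(0,\infty)$. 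In particular $w$ is positive and the equation stays uniformly parabolic on any finite time interval, so De Giorgi--Nash--Moser together with parabolic Schauder estimates upgrade this $C^0$ bound to uniform $C^\infty$ bounds on compact time intervals; this rules out finite-time blow-up and yields a unique global smooth positive solution.

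For the asymptotics I would pass to the fixed torus by rescaling: writing $\gamma_t=e^{2t}\bar\gamma_t$, Proposition~\ref{pro:longtime} shows $\bar\gamma_t\to\bar\gamma_\infty$ smoothly with $O(e^{-2t})$ error, so $\Delta_t=e^{-2t}\Delta_{\bar\gamma_t}$; moreover \eqref{eqn.1.1}, \eqref{hgamma}, and the derivative estimates in the proof of Proposition~\ref{pro:longtime} give $H_t=2+O(e^{-2t})$ and $K_t=O(e^{-2t})$, along with all spatial derivatives, at rate $e^{-2t}$. Setting $\phi=w-1$ and using $w-w^3=-\phi(2+3\phi+\phi^2)$, equation \eqref{PDEu} reads $H_t\partial_t\phi=e^{-2t}w^2\Delta_{\bar\gamma_t}\phi-(K_t+3)\phi(2+3\phi+\phi^2)$. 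The key observation is that at a point where $|\phi(t,\cdot)|$ is maximized the Laplacian term has a sign favorable to decay and can simply be discarded, which reduces the estimate to the differential inequality $\tfrac{d}{dt}\|\phi(t,\cdot)\|_{C^0}\le-\big(3-O(e^{-2t})-O(\|\phi\|_{C^0})\big)\|\phi(t,\cdot)\|_{C^0}$ in the Dini sense; since $w\equiv 1$ attracts under the flow, $\|\phi\|_{C^0}$ becomes small in finite time, and a first pass then gives $\|w-1\|_{C^0}\le Ce^{-(3-\delta)t}$ for every $\delta>0$, while feeding this back into the error terms sharpens it to $\|w-1\|_{C^0(T^2)}\le Ce^{-3t}$. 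To get the matching decay of spatial derivatives I would run the same maximum-principle (Bernstein-type) argument on the equations satisfied by $|\nabla^k\phi|_{\bar\gamma_t}^2$, exactly as the tensor $\beta=h-\gamma$ is treated in the proof of Proposition~\ref{pro:longtime}: at a spatial maximum the leading dissipative term has the right sign, the reaction term contributes $-\tfrac{2(K_t+3)}{H_t}|\nabla^k\phi|^2\sim-3|\nabla^k\phi|^2$, and every remaining term is genuinely lower order because the geometric coefficients converge at rate $e^{-2t}$ while $\phi=O(e^{-3t})$; induction on $k$ gives $\|\nabla^k(w-1)\|_{C^0}\le C_ke^{-3t}$.

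Finally, to prove convergence of $\mathbf{w}:=e^{3t}(w-1)$ I would differentiate in time; a direct computation from the rewritten equation gives
\[
\partial_t\mathbf{w}=\Big(3-\tfrac{2(K_t+3)}{H_t}\Big)\mathbf{w}+\tfrac{e^{-2t}w^2}{H_t}\Delta_{\bar\gamma_t}\mathbf{w}-\tfrac{K_t+3}{H_t}\,\mathbf{w}\big(3e^{-3t}\mathbf{w}+e^{-6t}\mathbf{w}^2\big).
\]
By the previous step $\mathbf{w}$ and all its spatial derivatives are uniformly bounded, so the coefficient $3-\tfrac{2(K_t+3)}{H_t}$ of the first term is $O(e^{-2t})$, the Laplacian term is $O(e^{-2t})$, and the cubic terms are $O(e^{-3t})$; hence $\partial_t\mathbf{w}=O(e^{-2t})$, and differentiating in space, $\partial_t\nabla^k\mathbf{w}=O(e^{-2t})$ for every $k$. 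Integrability of $e^{-2t}$ in $t$ then forces $\mathbf{w}(t,\cdot)$ to converge in $C^\ell(T^2)$ for every $\ell$ to a smooth limit $\mathbf{w}_\infty$, with $\|\mathbf{w}(t,\cdot)-\mathbf{w}_\infty\|_{C^\ell(T^2)}+\|\partial_t\mathbf{w}(t,\cdot)\|_{C^\ell(T^2)}\le C_\ell e^{-2t}$, which is the assertion of the lemma. The main obstacle, I expect, is the sharp $e^{-3t}$ rate: a naive maximum-principle estimate that merely bounds the Laplacian term rather than exploiting its sign yields only $e^{-2t}$, and the degeneration of the parabolic operator as $t\to\infty$ prevents one from invoking uniform Schauder estimates for the spatial derivatives---so those derivative bounds must come instead from the maximum principle applied to $|\nabla^k\phi|^2$, in the same spirit as the $\beta$-estimates of Proposition~\ref{pro:longtime}.
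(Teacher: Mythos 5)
Your proposal is correct in outline, and the two halves compare differently with the paper. The existence and $C^0$ part is essentially the paper's argument in a slightly different dress: the paper runs a Shi--Tam ODE comparison with explicit barriers $\overline{w}[C,\eta](t)=(1+Ce^{-2\int_0^t\eta})^{-1/2}$, $\eta=\tfrac{3}{2}+O(e^{-2t})$, which delivers global positivity, the uniform $C^0$ bound, \emph{and} the sharp rate $|w-1|\leq Ce^{-3t}$ in one stroke, whereas you first trap $w$ between $\min(1,\inf w_0)$ and $\max(1,\sup w_0)$ and then extract the rate from a Dini-derivative estimate on $\|w-1\|_{C^0}$ with a bootstrap; both are valid. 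Where you genuinely diverge is the derivative estimates and the convergence of $e^{3t}(w-1)$. You assert that the degeneration of the diffusion coefficient $e^{-2t}w^2/H_t$ blocks uniform Schauder theory and therefore run Bernstein-type maximum-principle estimates on $|\nabla^k\phi|^2_{\bar\gamma_t}$. The paper sidesteps the degeneration entirely by the change of time variable $\tau=-\tfrac{1}{2}e^{-2t}$, under which $e^{-2t}\partial_t=\partial_\tau$ and the equation for $z=e^{3t}(w-1)$ becomes \emph{uniformly} parabolic on the finite interval $[-\tfrac12,0)$ with bounded coefficients; standard parabolic regularity then gives smooth extension of $z$ to $\tau=0$, which is exactly the statement of the lemma. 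The paper's device is shorter and avoids the one delicate point in your route: at order $k$ your Bernstein argument must absorb terms containing $\nabla^{k+1}\phi$ (from differentiating the quasilinear coefficient $w^2$ and from commutators with $\Delta_{\bar\gamma_t}$) into the good Bochner term $-2e^{-2t}w^2H_t^{-1}|\nabla^{k+1}\phi|^2$; this works because those terms carry the same $e^{-2t}$ weight, but it is the step that would need to be written out carefully, in the spirit of the paper's induction for $\beta=h-\gamma$ in Proposition \ref{pro:longtime}. Once the uniform derivative bounds are in hand, your final computation of $\partial_t\mathbf{w}=O(e^{-2t})$ agrees term by term with the paper's equation for $z$ and correctly yields $C^\ell$ convergence with $e^{-2t}$ decay of time derivatives.
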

	
\begin{proof} 
As noted above, the Gauss curvature lower bound guarantees that the normal flow exists for all time and has leaves with positive definite second fundamental form, in particular $H_t >0$. Moreover, \eqref{hgamma} implies that $H_t=2+O(e^{-2t})$ as $t\rightarrow\infty$, and thus the mean curvature has a uniformly positive lower bound throughout the flow. Since the initial condition $w_0>0$, the linearized equation for \eqref{PDEu} is then strictly parabolic so that the implicit function theorem guarantees the existence of a unique positive solution $w$ for a short time. To extend the flow further, uniform $C^0$ bounds will be applied. Namely, following \cite[Lemma 2.2]{ShiTam} one may solve the ODE
\begin{equation}
\overline{w}'(t)=\eta(t)(\overline{w}-\overline{w}^3)
\end{equation}
to obtain
\begin{equation}
\overline{w}[C,\eta](t)=\left(1+C e^{-2\int_{0}^{t}\eta(\bar{t})d\bar{t}}\right)^{-1/2},
\end{equation}
where $C$ is a constant.  A comparison argument then yields
$\overline{w}[C_{-},\eta_-]\leq w\leq\overline{w}[C_{+},\eta_{+}]$ in which
\begin{equation}
\eta_{+}=\min_{\Sigma_t}\frac{K_t+3}{H_t},\quad\quad C_+=-1+\left(\max_{\Sigma_0} w_0 +1\right)^{-2},
\end{equation}
and if $\min_{\Sigma_0}w_0 >1$ then
\begin{equation}
\eta_{-}=\max_{\Sigma_t}\frac{K_t+3}{H_t},\quad\quad C_-=\left(\min_{\Sigma_0} w_0 \right)^{-2} -1,
\end{equation}
otherwise replace $\eta_-$ with $\eta_+$ to achieve the lower bound. Notice that \eqref{hgamma} gives $\lambda_i=1+O(e^{-2t})$, $i=1,2$ and from \eqref{eqn.1.1} this implies $K_t =O(e^{-2t})$. Therefore $\eta_{\pm}=3/2+O(e^{-2t})$, and the barrier estimates show that $w$ remains uniformly positive while also satisfying the asymptotics
\begin{equation}\label{C0}
|w-1|\leq C_0 e^{-3t},
\end{equation}
for some constant $C_0$. This analysis may be combined with Krylov-Safanov estimates \cite[Theorem 4.2]{KS} to obtain control in $C^{0,\alpha}$, followed by Schauder estimates to bound all higher derivatives. Long time existence and uniqueness of a smooth solution to \eqref{PDEu} is then established.

\begin{comment}
Following steps in \cite{ShiTam}, we define the following functions
\begin{equation}
\varphi(t)=\min_{\Sigma_t}\frac{ R_t+6}{H_0},\qquad 	 \psi(t)=\max_{\Sigma_t}\frac{R_t+6}{H_0}
\end{equation}
By Gauss equation and \eqref{eqn.1.1}, we have
\begin{equation}\label{G1}
\frac{R_t+6}{H_0}=\frac{2\lambda_1\lambda_2+4}{H_0}=3+O(e^{-2t})
\end{equation}
where  $\lambda_1\lambda_2=1+O(e^{-2t})$, and $H_0=2+O(e^{-2t})$. Therefore, $\varphi(t), \psi(t)>0$ for $t\geq t_0$ and $\exp\left(-\int_{t_0}^t\psi(s)ds\right),\exp\left(-\int_{t_0}^t\varphi(s)ds\right)\leq Ce^{-3t}\leq Ce^{-3s}$. Following the identical arguments as in \cite[Lemma 2.2]{ShiTam}, we can show that
\begin{equation}\label{C0}
|w-1|\leq Ce^{-3s}.
\end{equation}
\end{comment}

In order to obtain refined asymptotics, consider the function $z=e^{3t}\left(w-1\right)$. By rescaling the metric $\hat{\gamma}_t=e^{-2t}\gamma_t$ on $\Sigma_t$, we find that this function satisfies
\begin{align}
\begin{split}
\partial_t z =&e^{3t}\partial_t w+3z\\
=&e^{3t}\left(H_t^{-1}w^2 \Delta_t w+H_t^{-1}(K_t +3)(w-w^3)\right)+3z\\
%=&\frac{w^2}{H_0}\Delta_tz+\left(3-\frac{w\left(1+w\right)}{2H_0}\left(R_t+6\right)\right)z\\
=&e^{-2t}H_t^{-1}w^2\hat{\Delta}_t z+\left(3-H_t^{-1}(K_t +3)w(1+w)\right)z.\\
\end{split}
\end{align}
Here $\hat{\Delta}_t$ is the Laplacian with respect to $\hat{\gamma}_t$, which according to \eqref{gamma1} and Proposition \ref{pro:longtime} is uniformly equivalent to the Laplace operator with respect to the flat metric $\sigma$ on $T^2$. This equation suggests the change of variables $\tau=-\frac{1}{2}e^{-2t}$, which produces
\begin{equation}
\partial_\tau z=H_t^{-1}w^2\hat{\Delta}_tz+e^{2t}\left(3-H_t^{-1}(K_t +3)w(1+w)\right)z
\end{equation}
for $\tau\in[-\tfrac{1}{2},0)$. Note that the asymptotics discussed above show that the zeroth-order coefficient is $O(1)$ are $\tau\rightarrow 0$. Therefore, since the equation is uniformly parabolic standard theory shows that $z$ extends smoothly to a function $\mathbf{w}_{\infty}\in C^{\infty}(T^2)$ at $\tau=0$. The desired limit \eqref{limit1} and convergence properties now follow.
%Using \eqref{G1} and \eqref{C0}, we have 
%\begin{equation}
%e^{-2t}\left(3-\frac{w\left(1+w\right)}{2H_0}\left(R_t+6\right)\right)=O(1)
%\end{equation} so for $r\in [-\frac{1}{2},0]$, we have
%\begin{equation}
%\frac{\partial}{\partial r}z=\frac{w^2}{H_0}\hat{\Delta}_tz+O(1)z\\
%\end{equation}
%This is a uniformly parabolic PDE, and standard theory implies the existence of solution $z$ with asymptotics $w_\infty$ 
\end{proof}

Having constructed the extension $(\Omega_+,g_+)$ of $(\Omega,g)$, we will now verify that it is asymptotically hyperbolic; the precise definition of this type of asymptotic condition is provided in the next section. To accomplish this goal, we observe that in addition to the original Kottler coordinates
$(s,\theta)$ on $F(\Omega_+)\subset M_k$, an alternate set of coordinates $(t,\theta)$ is also available arising from the foliation given by the normal flow. Furthermore, the relation between the two sets of coordinates is provided by Proposition \ref{pro:longtime} and its proof, namely 
\begin{equation}\label{qoignoqioginqh}
s(t,\theta)-t=f(\theta)+O(e^{-2t})
\end{equation}
for some function $f\in C^{\infty}(T^2)$ as $t\to\infty$, where all derivatives of this quantity also decay at the same rate when measured with respect to the background metric $b$. Moreover, this relation together with \eqref{limit1} yields 
\begin{equation}\label{limit111}
\lim_{s\to\infty}e^{3s}\left(w\circ F^{-1}-1\right)=e^{3f}\mathbf{w}_{\infty}\circ \Theta_{\infty}^{-1}=: w_{\infty},
\end{equation}
and all derivatives of this quantity decay at the rate $O(e^{-2s})$ when measured in $b$; the notation $\Theta_{\infty}$ indicates the limiting diffeomorphism of the torus from Proposition \ref{pro:longtime}.
	
\begin{lemma}\label{lemma34}
Under the hypothesis of Lemma \ref{ogoinoinhoih}, the extension manifold $(\Omega_+,g_+)$ is asymptotically hyperbolic.
\end{lemma}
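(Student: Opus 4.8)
The plan is to deduce the lemma essentially directly from the asymptotics established in Sections~\ref{sec:flow} and \ref{Sec:BSTextension}. First I would fix the coordinate chart at infinity: identify $F(\Omega_+)$ with the corresponding region of $M_k$ and use there the native Kottler coordinates $(s,\theta)$, in which $F^*b$ is exactly $ds^2+e^{2s}\sigma$. In this chart the defining formula \eqref{gplus} reduces the whole discrepancy between $g_+$ and the model to the single term
\[
g_+-b=(w^2-1)\,dt^2,
\]
where $t$ denotes the unit normal flow time, now viewed as a function $t=t(s,\theta)$ on the end.

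The next step is to control this perturbation and its derivatives. By Lemma~\ref{ogoinoinhoih}, $w^2-1=O(e^{-3t})$, and by \eqref{qoignoqioginqh}, $t=s-f(\theta)+O(e^{-2s})$, so $w^2-1=O(e^{-3s})$; moreover \eqref{limit111} provides the refined limit $e^{3s}(w\circ F^{-1}-1)\to w_\infty$ in $C^\infty(T^2)$. For the one-form $dt$, inverting $s=t+f(\theta)+O(e^{-2t})$ and invoking the smooth convergence $\Theta(t,\cdot)\to\Theta_\infty$ from Proposition~\ref{pro:longtime} shows that the transition between the flow coordinates $(t,\theta)$ and the Kottler coordinates is, up to corrections that are $O(e^{-2s})$ together with all their derivatives, a translation in the radial variable composed with the fixed limiting diffeomorphism $\Theta_\infty$; hence $\partial_s t=1+O(e^{-2s})$ while the angular derivatives of $t$ are $O(1)$, with control in every order. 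Substituting into the displayed identity then gives that every component of $g_+-b$ in the Kottler chart is $O(e^{-3s})$, that the rescaled tensor $e^{3s}(g_+-b)$ converges smoothly as $s\to\infty$, and that all higher derivatives of $g_+-b$ inherit the corresponding decay from Proposition~\ref{pro:longtime} and Lemma~\ref{ogoinoinhoih}. Combined with the identity $R_{g_+}\equiv-6$ recorded after \eqref{PDEu}, so that $R_{g_+}+6$ vanishes identically and a fortiori lies in any weighted integrability class the definition asks for, this furnishes exactly the list of conditions in the definition of asymptotically hyperbolic with toroidal infinity given in the next section, and it holds at the borderline rate $e^{-3s}$ at which the mass aspect is well defined.

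I expect the only step requiring genuine care to be the passage from the flow coordinates $(t,\theta)$, in which the perturbation $(w^2-1)\,dt^2$ is simplest, to the Kottler coordinates $(s,\theta)$, in which the reference metric $b$ is explicit: one must check that the exponentially small quantities $f$, $\Theta(t,\cdot)-\Theta_\infty$, and the various $O(e^{-2t})$ remainders genuinely assemble into a perturbation of the specific metric $b=ds^2+e^{2s}\sigma$ at the claimed rate, rather than of some unspecified hyperbolic-type model, and that the derivative estimates survive the change of variables. Given \eqref{qoignoqioginqh}, \eqref{limit111}, Proposition~\ref{pro:longtime}, and Lemma~\ref{ogoinoinhoih}, this is routine; in effect the lemma is a corollary of the analysis carried out in the previous two sections, and no new estimate should be needed.
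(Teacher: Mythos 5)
Your overall strategy is the same as the paper's: write $g_+ - F^*b = (w^2-1)\,dt^2$, transfer to the Kottler coordinates $(s,\theta)$ via \eqref{qoignoqioginqh} and \eqref{limit111}, and use the decay of $w-1$ and of the coordinate transition to control the perturbation. Your bookkeeping of the cross terms (radial derivative of $t$ equal to $1+O(e^{-2s})$, angular derivatives $O(1)$, so that the $ds\,d\theta^i$ and $d\theta^i d\theta^j$ contributions are $O(e^{-4s})$ and $O(e^{-5s})$ in $b$-norm) is consistent with the paper's computation \eqref{AHg_+}.

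There is, however, a genuine gap at the final step. After the change of chart, the leading $O(e^{-3s})$ part of $g_+-b$ is $2w_\infty e^{-3s}\,ds^2$, i.e.\ it sits entirely in the \emph{radial} component, so the metric takes the form $(1+w_\infty e^{-3s})^2 ds^2 + e^{2s}\sigma + \mathcal{Q}$ with $\mathcal{Q}=O(e^{-4s})$. This does \emph{not} ``furnish exactly the list of conditions'' in the definition \eqref{oiaoginoaqinh}, which demands $\psi^*g = dr^2 + e^{2r}\sigma + e^{-r}\mathbf{m} + Q$ with the radial part identically $dr^2$ and $\mathbf{m}$ a symmetric 2-tensor \emph{on $T^2$} (no $dr$ components); a radial perturbation at the borderline order $e^{-3r}$ is precisely what the definition excludes, and it is the order at which the mass aspect lives, so it cannot be swept into $Q=o(e^{-3r})$. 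The paper closes this gap with the explicit change of radial coordinate $r = s - \tfrac{1}{3}w_\infty e^{-3s}$, which converts the radial perturbation into the admissible angular one, $\psi^*g_+ = dr^2 + \left(e^{2r}+\tfrac{2}{3}w_\infty e^{-r}\right)\sigma + \mathcal{Q}$, and thereby identifies $\mathbf{m}=\tfrac{2}{3}w_\infty\sigma$ --- an identification that is then essential for the mass formula \eqref{foiqbifubiuq}. You would need to either perform this gauge change or separately justify that the definition (and the resulting mass) is invariant under such changes of radial coordinate; as written, your argument stops one step short.
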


\begin{proof}
It suffices to rearrange the expression for the metric in the asymptotic end. Observe that utilizing \eqref{qoignoqioginqh} and \eqref{limit111} produces 
\begin{align}\label{AHg_+}
\begin{split}
(F^{-1})^*g_+=& (F^{-1})^*\left(w^2dt^2+\gamma_t\right)\\
=&(F^{-1})^*\left(dt^2+\gamma_t+(w^2-1)dt^2 \right)\\
=&(F^{-1})^*\left(F^*b+(w^2-1)dt^2\right)\\
%&=b+2(w-1)dt^2+O(e^{-6s})dt^2\\
=&ds^2+e^{2s}\sigma+2(w\circ F^{-1}-1)d(t\circ F^{-1})^2+O(e^{-6s})d(t\circ F^{-1})^2\\
=&ds^2+e^{2s}\sigma+2w_{\infty}e^{-3s}ds^2-4w_{\infty}e^{-3s}f_idsd\theta^i+2w_{\infty}e^{-3s}f_if_jd\theta^id\theta^j+\mathcal{Q}\\
=&(1+w_{\infty}e^{-3s})^2ds^2+e^{2s}\sigma+\mathcal{Q},
\end{split}
\end{align}
where 
\begin{equation}\label{alk3nhapgaj1}
|\mathcal{Q}|_{b}+| \pmb\nabla \mathcal{Q}|_b+|\pmb\nabla^2 \mathcal{Q}|_b=O(e^{-4s}).
\end{equation}
Consider now the change of radial coordinate $r=s-\frac{1}{3}w_{\infty}e^{-3s}$ and note that this yields
\begin{equation}\label{ooinoqinoinih}
\psi^*g_+=dr^2+\left(e^{2r}+\frac{2w_\infty}{3}e^{-r}\right)\sigma+\mathcal{Q},
\end{equation}
in which $\mathcal{Q}$ satisfies \eqref{alk3nhapgaj1} with $s$ replaced by $r$. Here $\psi:(r_0,\infty)\times T^2 \rightarrow \Omega_+ \setminus \mathcal{K}$, where $\mathcal{K}$ is a compact set, is the diffeomorphism arising from the composition of $F^{-1}$ and the radial coordinate change.
The desired result now follows.
\end{proof}

We end this section by recording an observation that the normal flow gives rise to a monotonic static Brown-York mass which compares the geometry of leaves in the Kottler setting to that in the Bartnik-Shi-Tam extension.  The next result follows from Lu-Miao \cite[Proposition 2.2]{LuMiao}, after noting that the Kottler manifold $M_k$ is static with potential $V=e^s$. 
	
\begin{prop}\label{propmontonicity}
Let $\{(\Sigma_t,\gamma_t)\}_{t=0}^{\infty}$ denote the normal flow emanating from a surface $\Sigma_0\subset M_k$ having Gauss curvature $K_0 >-1$. If $(\Omega_+,g_+=w^2 dt^2+\gamma_t)$ is a corresponding Bartnik-Shi-Tam extension then
\begin{equation}
\frac{d}{dt}\int_{\Sigma_t}V\left(H_t-H_{+}\right)dA_{\gamma_t}=-\int_{\Sigma_t}w^{-1}(w-1)^{2}\left(H_t\partial_{\nu}V+\frac{1}{2}V\left(H_t^2-|h_t|_{\gamma_t}^2\right)\right)dA_{\gamma_t},
\end{equation}
where $H_t$ and $H_+$ are the mean curvatures of $\Sigma_t$ with respect to $b$ and $g_+$ respectively, while $h_t$ is the second fundamental form with respect to $b$, and $\nu$ is the unit outer normal. In particular, since $\partial_{\nu}V>0$ and $H_t^2-|h_t|_{\gamma_t}^2>0$ the static Brown-York mass is nonincreasing in $t$, and is  constant if and only if $w=1$.
\end{prop}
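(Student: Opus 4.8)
The plan is to carry out the direct computation underlying Lu--Miao \cite[Proposition 2.2]{LuMiao}, which adapts to the present setting once one uses that $(M_k,b)$ is static with potential $V=e^{s}$, as recorded in the introduction. I will need two consequences of the static equation \eqref{ofnoinohiw}: taking its trace gives $\Delta_b V=3V$ (since $R_b=-6$), and contracting it with $\nu\otimes\nu$ together with $\mathrm{Ric}_b(\nu,\nu)=-2$ gives $\nabla^2_b V(\nu,\nu)=V$. The setup is that the extension $g_+=w^2 dt^2+\gamma_t$ and the reference $F^*b=dt^2+\gamma_t$ share the leaves $\Sigma_t$ but carry lapses $w$ and $1$, so the second fundamental form and mean curvature of $\Sigma_t$ computed in $g_+$ are $h_+=w^{-1}h_t$ and $H_+=w^{-1}H_t$; moreover along the unit normal flow one has $\partial_t(dA_{\gamma_t})=H_t\,dA_{\gamma_t}$, $\partial_t V=\partial_\nu V$ by the chain rule, and $\partial_t H_t=2-|h_t|_{\gamma_t}^2$ by \eqref{oinfoiqnh}.

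First I would differentiate $m(t):=\int_{\Sigma_t}V(H_t-H_+)\,dA_{\gamma_t}$, writing $H_t-H_+=H_t(1-w^{-1})$. The one term requiring care is $\partial_t H_+=\partial_t(w^{-1}H_t)$, into which I substitute the extension equation \eqref{PDEu} in the form $w^{-2}H_t\,\partial_t w=\Delta_t w+(K_t+3)(w^{-1}-w)$; this yields $\partial_t(H_t-H_+)=(2-|h_t|_{\gamma_t}^2)(1-w^{-1})+\Delta_t w+(K_t+3)(w^{-1}-w)$. Assembling $m'(t)$ using $\partial_t(dA_{\gamma_t})=H_t\,dA_{\gamma_t}$, the only nontrivial manipulation is to integrate by parts $\int_{\Sigma_t}V\Delta_t w=\int_{\Sigma_t}w\Delta_t V$ and then feed in the hypersurface identity $\Delta_b V=\Delta_{\gamma_t}V+\nabla^2_b V(\nu,\nu)+H_t\partial_\nu V$, which combined with $\Delta_b V=3V$ and $\nabla^2_b V(\nu,\nu)=V$ gives $\Delta_{\gamma_t}V=2V-H_t\partial_\nu V$.

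After replacing $H_t^2-|h_t|_{\gamma_t}^2$ by $2K_t+2$ via \eqref{eqn.1.1}, a short algebraic simplification---in which the combination $w+w^{-1}-2=w^{-1}(w-1)^2$ emerges---collects the terms into
\[
m'(t)=-\int_{\Sigma_t}w^{-1}(w-1)^2\Big(H_t\,\partial_\nu V+\tfrac{1}{2}V\big(H_t^2-|h_t|_{\gamma_t}^2\big)\Big)\,dA_{\gamma_t}+\int_{\Sigma_t}\big(2V-H_t\partial_\nu V\big)\,dA_{\gamma_t},
\]
and the second integral equals $\int_{\Sigma_t}\Delta_{\gamma_t}V\,dA_{\gamma_t}=0$ by the divergence theorem on the closed surface $\Sigma_t$; this is the asserted identity. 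The monotonicity then follows from $w>0$, $\partial_\nu V=\rho^{-1}V>0$, $H_t>0$, and $H_t^2-|h_t|_{\gamma_t}^2=2(K_t+1)>0$---the last two because the second fundamental form of every $\Sigma_t$ is positive definite and $K_t>-1$, as established after \eqref{eqn.1.1}---so the integrand is pointwise nonpositive; and $m'\equiv 0$ forces $w\equiv 1$ on each leaf, whence $w\equiv 1$ on $\Omega_+$ by uniqueness for \eqref{PDEu}.

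I expect the sole delicate point to be the algebraic consolidation in the last step: one must verify that the remainder left over after extracting the $w^{-1}(w-1)^2$ factor is exactly the total divergence $\Delta_{\gamma_t}V$, since it is this precise cancellation---rather than a mere estimate---that produces the sign-definite right-hand side and hence the monotonicity. Everything else reduces to standard first-variation formulas and bookkeeping.
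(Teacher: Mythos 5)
Your proposal is correct and takes essentially the same route as the paper, which simply cites Lu--Miao \cite{LuMiao}*{Proposition 2.2} after observing that $M_k$ is static with $V=e^s$; you have supplied the underlying computation, and I have checked that the algebra (in particular the consolidation into $w^{-1}(w-1)^2$ plus the total divergence $\Delta_{\gamma_t}V=2V-H_t\partial_\nu V$) works out exactly as you claim.
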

	
\section{Convergence of Static Brown-York Mass in the Extension}\label{Sec:asymptotes}
\setcounter{equation}{0}
\setcounter{section}{4}

A Riemannian 3-manifold $(M,g)$ will be called \textit{asymptotically hyperbolic with toroidal infinity}, if there is a compact set $\mathcal{K}\subset M$ such that its complement is diffeomorphic to a cylinder with torus cross-sections, and in the coordinates given by the diffeomorphism $\psi:(r_0,\infty)\times T^2 \rightarrow M\setminus\mathcal{K}$ the metric has an expansion
\begin{equation}\label{oiaoginoaqinh}
\psi^* g=dr^2+e^{2r} \sigma+e^{-r}\mathbf{m} +Q, 
\end{equation}
where $r\in (r_0,\infty)$ is the radial coordinate, $\sigma$ is a flat metric and $\textbf{m}$ is a symmetric 2-tensor all on $T^2$, and $Q$ is a symmetric 2-tensor on $(r_0,\infty)\times T^2$ with the property that
\begin{equation}\label{alk3nhapgaj1111}
|Q|_{b}+|\pmb\nabla Q|_b+|\pmb\nabla^2 Q|_b=o(e^{-3r}).
\end{equation}
As before $b$ is the model hyperbolic metric $dr^2+e^{2r}\sigma$ on $(r_0,\infty)\times T^2$, and $\pmb\nabla$ is the Levi-Civita connection of $b$. The quantity $\mathrm{Tr}_{\sigma}(3\mathbf{m})$ is referred to as the \textit{mass aspect function}, and by Chru\'{s}ciel-Herzlich \cite{ChruscielHerzlich} it gives rise to a well-defined total mass 
\begin{equation}
m(g)=\frac{1}{16\pi}\int_{T^2}\mathrm{Tr}_{\sigma}(3\mathbf{m}) dA_{\sigma}
\end{equation}
if the scalar curvature satisfies $r (R_g +6)\in L^{1}(M\setminus\mathcal{K})$; this condition will henceforth be included in the asymptotics definition above.
We have that the Bartnik-Shi-Tam extension $(\Omega_+,g_+)$ constructed in the previous section is asymptotically hyperbolic with toroidal infinity by Lemma \ref{lemma34}. 
%and since its scalar curvature satisfies $R_{g_+}=-6$. 
Therefore, the extension's total mass may be computed from \eqref{ooinoqinoinih} to yield 
\begin{equation}\label{foiqbifubiuq}
m(g_+)=\frac{1}{4\pi}\int_{T^2}w_{\infty}dA_{\sigma},
\end{equation}
after noting that $\mathbf{m}=\tfrac{2}{3}w_\infty\sigma$ in this case.

\begin{prop}\label{proposition}
Under the hypotheses and notation of Proposition \ref{propmontonicity} it holds that
\begin{equation}
\frac{1}{8\pi}\int_{\Sigma_0}VH_0(1-w_0^{-1})dA_{\gamma_0}\geq m(g_+),
\end{equation}
where $w_0>0$ is the initial condition of equation \eqref{PDEu}.
\end{prop}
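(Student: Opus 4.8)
The plan is to combine the monotonicity of the static Brown-York mass along the normal flow, established in Proposition~\ref{propmontonicity}, with an asymptotic computation identifying its limit with the total mass $m(g_+)$ of the extension. Write $Q(t)=\tfrac{1}{8\pi}\int_{\Sigma_t}V(H_t-H_+)\,dA_{\gamma_t}$, the quantity differentiated in Proposition~\ref{propmontonicity}; its hypotheses hold here, since $\partial_\nu V>0$ and $H_t^2-|h_t|_{\gamma_t}^2=2K_t+2>0$ by \eqref{eqn.1.1} and the preservation of $K_t>-1$ noted in Section~\ref{Sec:BSTextension}. Hence $Q'(t)\le 0$, so $Q(0)\ge\lim_{t\to\infty}Q(t)$, and it suffices to evaluate both ends.

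For the initial end, observe that since $g_+=w^2dt^2+\gamma_t$, the leaf $\Sigma_t$ has $g_+$-unit normal $w^{-1}\partial_t$, so its second fundamental form with respect to $g_+$ equals $\tfrac{1}{2w}\partial_t\gamma_t$, while with respect to $F^*b=dt^2+\gamma_t$ it equals $\tfrac12\partial_t\gamma_t$; therefore $H_+=w^{-1}H_t$ on every leaf. Evaluating at $t=0$, where $w=w_0$, gives $Q(0)=\tfrac1{8\pi}\int_{\Sigma_0}VH_0(1-w_0^{-1})\,dA_{\gamma_0}$, which is exactly the left-hand side of the asserted inequality.

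For the limit at infinity I would show $\lim_{t\to\infty}Q(t)=m(g_+)$. Using $H_t-H_+=w^{-1}(w-1)H_t$ and assembling the decay estimates already in hand — $H_t=2+O(e^{-2t})$ from \eqref{eqn.1.1} and \eqref{hgamma}; the refined expansion $w\circ F^{-1}-1=w_\infty e^{-3s}+o(e^{-3s})$ along the leaves in Kottler coordinates, with derivatives controlled, from \eqref{limit111}; the potential $V=e^{s}$; and, from Proposition~\ref{pro:longtime} together with \eqref{qoignoqioginqh}, the relation $s=v(t,\theta)=t+f(\theta)+O(e^{-2t})$ on $\Sigma_t$ and the area expansion $dA_{\gamma_t}=e^{2v}\rho\,dA_\sigma$ with $\rho=1+O(e^{-2t})$ — one multiplies through. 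The factor $e^{2v}$ in $dA_{\gamma_t}$ cancels the $e^{-2s}=e^{-2v}$ produced in $V(H_t-H_+)$, leaving $V(H_t-H_+)\,dA_{\gamma_t}=(2w_\infty+o(1))\,dA_\sigma$ uniformly on $T^2$. Integrating and passing to the limit yields $\lim_{t\to\infty}Q(t)=\tfrac{1}{4\pi}\int_{T^2}w_\infty\,dA_\sigma=m(g_+)$ by \eqref{foiqbifubiuq}, and combining with $Q(0)\ge\lim_{t\to\infty}Q(t)$ completes the argument.

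The main obstacle is precisely this last limit: one must track three competing exponential scales at once — the $O(e^{-2t})$ errors from the leaf geometry and mean curvature, the $e^{-3t}$ size of $w-1$, and the $e^{2t}$ growth of $V$ and of $dA_{\gamma_t}$ — and check that after the cancellations the integrand converges with errors uniform over $T^2$, so that the limit passes under the integral. A tidy way to bookkeep is to rescale by $\hat\gamma_t=e^{-2t}\gamma_t$, which by Proposition~\ref{pro:longtime} converges smoothly on $T^2$ to the flat metric $e^{2f}\sigma$; then $Q(t)=\tfrac1{8\pi}\int_{T^2}e^{2t}V(H_t-H_+)\,dA_{\hat\gamma_t}$ with $e^{2t}V(H_t-H_+)\to 2e^{-2f}w_\infty$ uniformly and $dA_{\hat\gamma_t}\to e^{2f}\,dA_\sigma$, whence the value $\tfrac{1}{4\pi}\int_{T^2}w_\infty\,dA_\sigma$ falls out.
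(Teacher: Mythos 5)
Your proposal is correct and follows essentially the same route as the paper: apply the monotonicity of $\int_{\Sigma_t}V(H_t-H_+)\,dA_{\gamma_t}$ from Proposition \ref{propmontonicity}, identify the $t=0$ value with the left-hand side via $H_+=w^{-1}H_t$, and show the limit as $t\to\infty$ equals $m(g_+)$ using $H_t=2+O(e^{-2t})$, $e^{3s}(w-1)\to w_\infty$, and $dA_{\gamma_t}=e^{2s}(1+O(e^{-2s}))\,dA_\sigma$ together with \eqref{foiqbifubiuq}. Your extra bookkeeping (deriving $H_+=w^{-1}H_t$ from the warped form of $g_+$ and rescaling by $\hat\gamma_t$) only makes explicit what the paper states in one line.
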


\begin{proof}
We first show that the static Brown-York mass of normal flow leaves in the extension converges to the total mass. To see this recall that
\begin{equation}
H_+= w^{-1} H_t, \quad H_t=2+O(e^{-2t}),\quad e^{3s}(w-1)=w_{\infty}+o(1),\quad dA_{\gamma_t}=e^{2s}(1+O(e^{-2s}))dA_{\sigma},
\end{equation}
where $s$ and $t$ are related through \eqref{qoignoqioginqh}. It follows that
\begin{align}\label{eqm2}
\begin{split}
\int_{\Sigma_t}V\left(H_t-H_{+}\right)dA_{\gamma_t}=&\int_{T^2}e^s (2+O(e^{-2t}))\left(1-w^{-1}\right) e^{2s}(1+O(e^{-2s}))dA_{\sigma}\\
=&\int_{T^2}2w_\infty  dA_{\sigma} +o(1).
\end{split}
\end{align} 
Combining this with monotonicity of the static Brown-York mass from Proposition \ref{propmontonicity}, along with formula \eqref{foiqbifubiuq} for the mass, produces
\begin{equation}
\frac{1}{8\pi}\int_{\Sigma_0}VH_0(1-w_0^{-1})dA_{\gamma_0}
\geq \lim_{t\rightarrow\infty}\frac{1}{8\pi}\int_{\Sigma_t}V\left(H_t-H_{+}\right)dA_{\gamma_t}= m(g_+).
\end{equation}
\end{proof}

\section{Proof of the Main Theorem}\label{Sec:mainthm}
\setcounter{equation}{0}
\setcounter{section}{5}

The primary result will be proven with the help of a positive mass theorem involving corners. This version of the positive mass theorem in the setting of asymptotically hyperbolic manifolds with toroidal infinity, may be
established using the level set technique associated with spacetime harmonic functions. This approach has led to the desired theorem when corners are not present \cite[Theorems 1.1 and 1.2]{AlaeeKhuriHung}, and Tsang \cite[Theorem 1.1]{Tsang} has shown that it gives rise to a theorem with corners for the asymptotically flat case. 
%recently been used to prove the positive mass theorem in the asymptotically flat and asymptotically %hyperboloidal (spherical infinity) settings \cite{BHKKZ,BKKS,HKK}, and was inspired by the work of Stern %\cite{Stern} where the level sets of harmonic maps were used to study scalar curvature on compact 3-%manifolds. We refer the reader to the survey \cite{BHKKZ1} for these and other developments concerning the %level set method. 
On a Riemannian 3-manifold a function will be referred to as \textit{spacetime harmonic} \cite{HKK} if it satisfies the equation
\begin{equation}\label{spacetimeharmonic}
\Delta u-3|\nabla u|=0.
\end{equation}
The next lemma is a consequence of \cite[Theorems 1.1, 1.2, and 1.3]{AlaeeKhuriHung} when $k=-g$, while taking into account the analysis at corners presented in \cite{Tsang}.

\begin{lemma}\label{goqaoingoinh}
Let $(M,g)$ be an orientable 3-dimensional asymptotically hyperbolic manifold with toroidal infinity
and having nonempty boundary $\partial M\neq \varnothing$. Suppose that $M$ satisfies the homotopy condition with respect to conformal infinity, and $H_2(M,\partial M;\mathbb{Z})=0$. Assume further that there is a closed smooth hypersurface `corner' $\mathcal{S}\subset M$ disjoint from the boundary, which separates $M$ into a compact portion $M_-$ and a noncompact portion $M_+$ containing the asymptotic end,
such that $g$ is smooth up to $\mathcal{S}$ and is Lipschitz across it.
\begin{enumerate}
\item If the boundary $\partial M$ has mean curvature $H\leq 2$ with respect to the inner normal, then 
there exists a spacetime harmonic function $u\in C^{1,\alpha}_{loc}(M)\cap C^{2,\alpha}_{loc}(M\setminus\mathcal{S})$, $0<\alpha<1$ which asymptotes to the radial coordinate function $e^r$, and induces the mass lower bound
\begin{equation}
m(g)\geq\frac{1}{16\pi}\int_{M\setminus \mathcal{S}}\left(\frac{|\nabla^2 u-3|\nabla u|g|^2}{|\nabla u|}+(R_g +6)|\nabla u|\right)d\mathbf{V} +\frac{1}{8\pi}\int_{\mathcal{S}}(H_- -H_+)dA,
\end{equation}
where $H_{\pm}$ are the mean curvatures of $\mathcal{S}\subset M_{\pm}$ with respect to the normal pointing towards the asymptotic end. In particular, if $R_g \geq -6$ then the mass is nonnegative $m(g)\geq 0$.

\item The same conclusion holds if the boundary $\partial M$ contains additional components of genus zero with $H\leq -2$.

\item Under the same hypotheses together with $R_g\geq-6$, if $m(g)= 0$ then $(M,g)$ is isometric to a region $\left([s_0,\infty)\times T^2,b\right)$ in a Kottler manifold.

\item If $\partial M$ is a minimal surface and $\partial_1 M$ is the component with least area then a Penrose-type inequality holds
\begin{equation}
m(g)\geq \mathcal{C}\frac{|\partial_1 M|}{16\pi},
\end{equation}
where $\mathcal{C}=4\min_{\partial M}\partial_{\upsilon}u>0$ and $\upsilon$ denotes the unit inner normal.
\end{enumerate}    
\end{lemma}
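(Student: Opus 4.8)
The architecture is to reproduce the level--set/spacetime--harmonic argument of \cite{AlaeeKhuriHung} (specialized to $k=-g$) and graft onto it the corner analysis of \cite{Tsang}; below I indicate how the pieces combine and where the toroidal asymptotically hyperbolic geometry enters.

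\textbf{Step 1 (Construction of $u$).} The plan is to solve \eqref{spacetimeharmonic} on $M$ with $u$ asymptotic to $e^{r}$ along the toroidal conformal infinity and with a Dirichlet (or oblique) boundary condition on $\partial M$ chosen exactly as in \cite{AlaeeKhuriHung}, so that the boundary term later appearing in the monotonicity formula is signed by the mean curvature hypotheses ($H\le 2$ on the components whose inner normal points into $M$, and $H\le -2$ on the additional genus--zero components). Since the operator degenerates where $\nabla u=0$ and the metric is merely Lipschitz across $\mathcal S$, I would work on an exhaustion by compact regions, first mollifying $g$ in a neighborhood of $\mathcal S$, solving the regularized problems, and then passing to the limit. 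Barriers built from $e^{r}$ and its lower--order perturbations near infinity, together with De Giorgi--Nash--Moser and Schauder estimates (interior and up to the smooth boundary), yield a solution $u\in C^{1,\alpha}_{loc}(M)\cap C^{2,\alpha}_{loc}(M\setminus\mathcal S)$ with the stated asymptotics, the $C^{1,\alpha}$ regularity across $\mathcal S$ being the output of the mollification argument of \cite{Tsang}.

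\textbf{Step 2 (Monotonicity and the mass bound).} For a regular value $t$ put $\Sigma_t=u^{-1}(t)$. Using the homotopy condition and $H_2(M,\partial M;\mathbb Z)=0$ one checks, as in \cite{AlaeeKhuriHung}, that every regular level set is a disjoint union of tori, so the Gauss--Bonnet contribution $\int_{\Sigma_t}K_{\Sigma_t}\,dA$ vanishes. Feeding the spacetime harmonic equation into the Bochner formula for $u$ and combining with the traced Gauss equation on the level sets gives a divergence identity; integrating it over $\{t_1<u<t_2\}$ and applying the coarea formula produces a monotone quantity $\mathcal F(t)$ along the leaves with
\begin{equation}
\mathcal F(t_2)-\mathcal F(t_1)=\tfrac12\int_{\{t_1<u<t_2\}}\left(\frac{|\nabla^2 u-3|\nabla u|g|^2}{|\nabla u|}+(R_g+6)|\nabla u|\right)d\mathbf{V},
\end{equation}
together with, when $\{t_1<u<t_2\}$ meets $\mathcal S$, an additional jump term coming from the failure of the normal flux of the divergence identity to match across the corner; a short computation identifies this jump with $\tfrac12\int_{\mathcal S}(H_--H_+)\,dA$. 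Letting $t_2\to\infty$, the asymptotics $u\sim e^{r}$ and the expansion \eqref{oiaoginoaqinh} identify $\lim\mathcal F$ with a multiple of the Chru\'{s}ciel--Herzlich mass $m(g)$, while letting $t_1$ run to the boundary side the chosen boundary condition together with $H\le 2$ (resp. $H\le -2$) makes the boundary contribution have the favorable sign. Collecting terms yields the displayed inequality, and $R_g\ge -6$ then gives $m(g)\ge 0$.

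\textbf{Step 3 (Rigidity and Penrose).} If $m(g)=0$ and $R_g\ge-6$, then every nonnegative term above vanishes: $R_g\equiv-6$, the Hessian term is zero, $H_-=H_+$ on $\mathcal S$, and the boundary contributions vanish. The equality $H_-=H_+$ promotes $g$ to a corner--free, hence smooth, metric across $\mathcal S$ by elliptic regularity (the scalar curvature being the constant $-6$), while the vanishing of the Hessian term forces the level sets of $u$ to be totally umbilic and $|\nabla u|$ constant on each, which integrates to a warped product $dr^2+\phi(r)^2\sigma$ over a flat torus; then $R_g=-6$ together with the asymptotic normalization pins down $\phi(r)=e^{r}$, i.e. $(M,g)$ is the Kottler region $([s_0,\infty)\times T^2,b)$. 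For the Penrose inequality, when $\partial M$ is minimal the mean curvature of the level sets tends to zero there, so $\mathcal F$ no longer vanishes on the boundary side; instead the Hopf lemma gives $\partial_\upsilon u>0$ on $\partial M$ (whence $\mathcal C>0$), and a coarea computation at the minimal boundary shows the boundary contribution is bounded below by $\mathcal C\,|\partial_1 M|$, where $\partial_1 M$ is the least--area component. Combining with the monotonicity of Step 2 gives $m(g)\ge \mathcal C|\partial_1 M|/16\pi$.

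\textbf{Main obstacle.} The crux is the low regularity across $\mathcal S$, which must be handled in two places: proving that the barriers and elliptic estimates survive the mollification of $g$ so that the limiting $u$ is genuinely $C^{1,\alpha}$ globally and $C^{2,\alpha}$ off $\mathcal S$, and extracting \emph{precisely} the corner term $\tfrac{1}{8\pi}\int_{\mathcal S}(H_--H_+)\,dA$ from the limit of the regularized divergence identities. Both steps follow the template of \cite{Tsang}, but must be re-verified in the asymptotically hyperbolic toroidal setting, where one also has to confirm that the regular level sets of $u$ are tori (so that Gauss--Bonnet contributes nothing) and that the boundary integral at infinity reproduces the Chru\'{s}ciel--Herzlich mass \cite{ChruscielHerzlich}.
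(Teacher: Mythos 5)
Your proposal takes essentially the same route as the paper, whose proof of this lemma consists precisely of invoking \cite{AlaeeKhuriHung}*{Theorems 1.1--1.3} with $k=-g$ together with the corner analysis of \cite{Tsang}; your three steps are a faithful unpacking of how those ingredients combine (existence of the spacetime harmonic function, the integrated Bochner/Gauss--Bonnet identity with the corner jump term $\tfrac{1}{8\pi}\int_{\mathcal S}(H_--H_+)\,dA$, and the rigidity and Penrose refinements). I see no gap relative to the paper's argument.
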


We may now prove Theorem \ref{thm:PMT}.  Since the boundary component $\Sigma$ of $(\Omega,g)$ has Gauss curvature $K>-1$ and admits a graphical isometric embedding $\Sigma_0$ into a Kottler manifold, Lemmas \ref{ogoinoinhoih} and \ref{lemma34} can be used to construct and asymptotically hyperbolic extension $(\Omega_+,g_+)$ with toroidal infinity. Moreover, since the mean curvature of $\Sigma$ is positive $H>0$, we may choose the initial condition of \eqref{PDEu} to be $w_0=H_0/H$ where $H_0$ is the mean curvature of $\Sigma_0$, so that the mean curvature of $\partial\Omega_+=\Sigma_0$ agrees with that of $\Omega$, that is $H_+=H$. The remaining hypotheses for $(\Omega,g)$ are sufficient to apply the positive mass theorem with corners result, Lemma \ref{goqaoingoinh}, to the composite manifold $(\Omega \cup \Omega_+,g\cup g_+)$. Note that the homotopy condition with respect to conformal infinity is satisfied, since $\Omega$ satisfies the homotopy condition with respect to $\Sigma$, and the extension $\Omega_+$ is homotopy equivalent to $\Sigma_0$. It follows that $m(g_+)\geq 0$, and hence Proposition \ref{proposition} implies that the static Brown-York mass is nonnegative $m_{BY}^S(\Sigma)\geq 0$. This establishes parts (1) and (2) of the main theorem. Parts (3) and (4) follow similarly from the same numbered portions of Lemma \ref{goqaoingoinh}.

\appendix

\section{An Example}
\label{appA}

Here we show that results of Theorem \ref{thm:PMT} fail to hold if some hypotheses are not satisfied.
Consider $\Omega=[r_h,r_0]\times T^2$ with Riemannian metric
\begin{equation}
g=r^{-2}(1-r^{-3})^{-1}dr^2+r^2(1-r^{-3})d\xi^2+r^2d\theta^2,
\end{equation}
where $r_h\geq 1$, and $\xi$, $\theta$ are coordinates on $T^2$ with periods $P_\xi$ and $P_\theta$ respectively. Note that if $P_\xi=4\pi/3$ then this metric can be extended smoothly to $r=1$, and in this case with $r_0$ replaced by $\infty$ the resulting manifold is called the \textit{Horowitz-Myers geon} \cite{Horowitz,Woolgar}. The geon has the topology of a solid torus $S^1 \times D^2$, is static with potential $V=r$ and scalar curvature $R_g=-6$, and is asymptotically hyperbolic with toroidal infinity
having total mass $-\tfrac{1}{16\pi}P_{\xi}P_{\theta}$. Let $\Sigma=\{r_0\}\times T^2$ and $\Sigma_h=\{r_h\}\times T^2$, $r_h >1$ be the boundary components of $\Omega$, and consider the Kottler manifold $M_k=((0,\infty)\times T^2,b)$ with metric
\begin{equation}
b=r^{-2}dr^2+r^2 \sigma,\quad\quad\quad \sigma=(1-r_0^{-3})d\xi^2+d\theta^2.
\end{equation}
Clearly $\Sigma$ isometrically embeds into $M_k$ as the surface $\Sigma_0=\{r=r_0\}$. Straightforward calculations show that the mean curvatures and area element are given by
\begin{equation}
H_0 =2, \quad\quad H=(1-r_0^{-3})^{-1/2}\left(2-\frac{1}{2}r_0^{-3}\right)=2+\frac{1}{2}r_0^{-3}+O(r_0^{-6}),\quad\quad dA=r_0^2 \sqrt{1-r_0^{-3}}d\xi d\theta,
\end{equation}
and therefore the static Brown-York mass becomes
\begin{equation}
m_{BY}^S(\Sigma)=\frac{1}{8\pi}\int_{\Sigma}V(H_0-H)dA=-\frac{1}{16\pi}P_{\xi}P_{\theta}+O(r_0^{-3}).
\end{equation}
We conclude that this quasi-local mass is negative for large $r_0$. However, $(\Omega,g)$ violates the mean curvature hypothesis of the main theorem since $H>2$ at $\Sigma_h$. Alternatively, one may consider the case with $r_h=1$ to obtain a further counterexample, when there is no inner boundary and $\Omega$ does not satisfy the homotopy condition.

\end{document}